\documentclass[twocolumn]{autart}    % Enable this line and disable the 
\usepackage{graphicx}      % include this line if your document contains figures
\usepackage{natbib}        % required for bibliography
\usepackage{xcolor}
\usepackage{amsmath,amssymb,mathtools}
\usepackage{soul}
\usepackage{algorithm}
\usepackage{algpseudocode}
\usepackage{tikz}
\usetikzlibrary{positioning}

\renewcommand{\Re}{\mathbb{R}}
\newcommand{\calA}{\mathcal{A}}
\newcommand{\nxn}{{n\times n}}
\newcommand{\Ne}{\mathbb{N}}
\newcommand{\calD}{\mathcal{D}}

\newcommand{\SPn}{\Re^\nxn_{\succ0}}

\newcommand{\calV}{\mathcal{V}}

\newcommand{\rhoquad}{\rho_{\mathrm{quad}}}

\newcommand{\Sph}{\mathbb{S}}
\newcommand{\lambdamin}{\lambda_{\mathrm{min}}}

\DeclareMathOperator*{\argmin}{arg\,min}

\newcommand{\kappacond}{\overline\kappa}
\newcommand{\lambdamax}{\lambda_{\mathrm{max}}}
\newcommand{\mustate}{\mu^{\mathrm{state}}}
\newcommand{\mumode}{\mu^{\mathrm{mode}}}
\newcommand{\munoise}{\mu^{\mathrm{noise}}}
\newcommand{\qerror}{Q^{\mathrm{error}}}

\newcommand{\Vtilde}{\tilde{V}}
\newcommand{\Vover}{\overline{V}}
\makeatletter
\renewcommand{\thealgorithm}{\arabic{algorithm}}
\makeatother

\begin{document}

\begin{frontmatter}
%\runtitle{Insert a suggested running title}  % Running title for regular 
                                              % papers but only if the title  
                                              % is over 5 words. Running title 
                                              % is not shown in output.

\title{Direct vs. Indirect Data-Driven Control: Case-study of Switched Systems Stability}
                                                % than 10 words.

\thanks[footnoteinfo]{\textbf{Preprint version. Submitted to Automatica.}\\GB is an FNRS Postdoctoral Researcher.
RJ is an FNRS honorary Research Associate.
This project has received funding from the European Research Council (ERC) under the European Union's Horizon 2020 research and innovation programme under grant agreement No 864017 -- L2C.}

\author[First]{Alexis Vuille} 
\author[First]{Guillaume O.~Berger} 
\author[First]{Rapha\"el M.~Jungers}

\address[First]{UCLouvain, Louvain-la-Neuve, Belgium (e-mail: firstname.lastname@uclouvain.be).}

\begin{keyword}
data-driven methods, statistical learning, stability analysis, system identification, switched linear systems
\end{keyword}

\begin{abstract}                          % Abstract of not more than 200
A central methodological question in data-driven control is whether to adopt a direct or indirect approach.
Direct methods infer a controller or certificate directly from data, while indirect methods first identify a model and then apply model-based control techniques.
Recent developments of the direct method have led to finite-sample guarantees for the data-driven stability analysis of switched linear systems under various settings.
However, for the indirect method, such guarantees remain largely elusive.
In this paper, we provide a novel framework for the stability analysis of switched linear systems from noisy state measurements, using the indirect approach.
Our framework comes with finite-sample guarantees on the convergence rate of the system.
For that, we combine generalization bounds from machine learning and system identification with quadratic Lyapunov analysis.
To enable comparison, we also extend existing direct methods to handle measurement noise beyond the bounded-noise case available in the literature.
Finally, we compare the two approaches through numerical experiments, revealing that the indirect approach can give tighter probabilistic guarantees as well as greater robustness to noise and outliers than the direct approach.
%These observations reveal limitations of current state-of-the-art methods and highlight opportunities for their improvement.
\end{abstract}

\end{frontmatter}

\section{Introduction}
In recent years, data-driven methods have emerged as a central paradigm for the analysis and control of cyber-physical systems \cite{nejati2023formal,coulson2019data,de2019formulas,fan2017dryvr}. % add refs given in comments below.
This development is driven by several converging trends. Indeed, many modern systems are proprietary, highly complex, or governed by poorly understood physical principles, making modeling from first principles impractical.
At the same time, advances in sensing technologies and the widespread deployment of inexpensive and accurate sensors have increased the availability of data, complemented by user feedback and public databases. In parallel, statistical learning theory has undergone major theoretical and algorithmic developments, providing rigorous tools for extracting guarantees from finite datasets \cite{shalevshwartz2014understanding}.

An important question in data-driven control is the distinction between \emph{indirect} and \emph{direct} approaches, which has attracted significant attention; see, e.g., \cite{dorfler2023directvsindirect}.
In the indirect approach, one first identifies a model from data and then performs analysis or controller synthesis on the identified model. In contrast, the direct approach bypasses system identification and constructs certificates, such as Lyapunov or barrier functions, directly from data. While both paradigms are well established, their relative advantages remain insufficiently understood.%, in particular when one seeks formal guarantees from finite and noisy datasets. 

Recent works have started to investigate this question in the context of linear control problems; see, e.g., \cite{dorfler2023certaintyequivalence,krishnan2021direct,dorfler2023bridgingdirectvsindirect}. Although these studies provide valuable insights into the relative merits of direct and indirect methods, it remains unclear to what extent their conclusions extend beyond the linear setting. This motivates the search for more complex classes of systems that can serve as testbeds for investigating the direct-vs-indirect question.

In this paper, we consider this question in the context of switched linear systems. Switched linear systems constitute a paradigmatic class of hybrid systems that are more complex than linear systems and arise naturally in a wide range of applications or as approximation of nonlinear dynamics; see, e.g., \cite{liberzon2003switching,jungers2009thejoint,sun2011stability}.
Their stability is characterized by the joint spectral radius (JSR, \cite{rota1960note}), which captures the maximal exponential growth rate of trajectories of the system.
It is well known that computing the JSR exactly is a hard problem \cite{blondel2000boundedness,jungers2009thejoint}.
Nevertheless, several powerful approximation techniques have been developed in the model-based setting, including combinatorial and sum-of-squares relaxations \cite{ahmadi2014joint,jungers2009thejoint}.

{When a model of the system is not available, \cite{de2019formulas,bisoffi2022data} show how stability can be analyzed and stabilizing controllers can be designed for linear systems directly from trajectory data.
Several recent works also study the case of switched linear systems \cite{kenanian2019data,berger2021chanceconstrained,rubbens2021datadriven,wang2021datadriven,banse2024learning,vuille2024data,vuille2025stochastic}.
These approaches belong to the family of \emph{direct} methods: they construct Lyapunov functions (e.g., quadratic or polynomial) directly from data (which can include full or partial state and noisy measurements) and rely on tools from scenario optimization and statistical learning theory to derive probabilistic guarantees on the stability of the system.
Despite these advances, existing direct methods still face limitations in terms of robustness to noise and outliers \cite{banse2024learning}.
Furthermore, the literature has almost exclusively focused on the direct paradigm; comparatively, little attention has been devoted to the development of indirect approaches with finite-sample guarantees.

\textbf{Contributions. }
Our contribution is threefold.
First, we study the indirect approach for data-driven stability analysis of switched linear systems. Our proposed method first identifies a switched linear model from data (notably, any hybrid system identification method can be used, e.g., \cite{paoletti2007identification,lauer2019hybrid}); then, it upper bounds the JSR of the system by synthesizing a robust quadratic Lyapunov function for the identified model.
Crucially, we derive finite-sample probabilistic guarantees on this JSR upper bound by i) bounding the identification error with high probability and ii) propagating this bound to the JSR upper bound via sensitivity analysis of the quadratic Lyapunov function. For the first part, we leverage split-set methods from statistical learning theory (see \textit{Related work} for more information).
Our framework applies to systems with noisy full-state measurements.

Second, we revisit existing direct methods that account for noisy data, namely \cite{banse2024learning}, by i) refining the finite-sample guarantees in the presence of noisy measurements and ii) extending them to unbounded noise. Besides advancing the state of the art in direct data-driven control, this also enables fairer comparison with the indirect approach, which in our framework natively accounts for unbounded noise.

Finally, we compare the direct and indirect approaches on a numerical benchmark.
Our goal is to identify which approach is favorable in different settings, both in computation time and sample complexity.
Our results show that in the low-noise regime both approaches are able to provide stability guarantees for the benchmark system, even though the indirect approach requires  less data than the direct approach.
In the higher-noise regimes, only the indirect approach was able to provide meaningful stability guarantees, demonstrating greater robustness to noise and outliers.
The indirect approach also exhibits better computational complexity.
These findings are valuable (i) for practitioners who have to select one method for their application}, and (ii) to contribute to the overall debate on direct vs indirect methods by highlighting the strengths and weaknesses of both approaches on a representative benchmark.

\subsection*{Related work}

In the context of switched linear systems, several works have investigated stability certification from trajectory data.
These works follow the direct paradigm, learning a stability certificate directly from data and deriving probabilistic guarantees on its validity. The original result in \cite{kenanian2019data} shows how statistical guarantees derived from scenario optimization theory \cite{campi2008exact,calafiore2010random} can be translated into stability guarantees under a given confidence level. Another inspiration for the present work is \cite{banse2024learning}, which derives probabilistic upper bounds on the joint spectral radius under bounded measurement noise.
We extend the latter to unbounded measurement noise and improve it by exploiting symmetries of the problem.

In parallel, the identification of switched and hybrid systems from data has been extensively studied, leading to a broad literature
\cite{bako2011identification,paoletti2007identification,vidal2008recursive,lauer2019hybrid}. However, finite-sample guarantees remain scarce. A notable exception is \cite{massucci2022statistical}, which provides statistical guarantees for hybrid system identification.
Our framework differs from \cite{massucci2022statistical} in that our objective is not identification itself, but rather the use of identification guarantees for stability certification. This requires controlling the tail of the identification error, rather than its expectation as in \cite{massucci2022statistical}, in order to obtain sharp probabilistic stability guarantees. To this end, we leverage split-set methods from statistical learning theory (see, e.g., \cite{raschka2018model}), which provide simple and effective finite-sample confidence bounds that are particularly well suited to our setting. Split-set methods have already been used in control settings (for instance, conformal prediction in safety analysis \cite{fan2020statistical} and the holdout method in reachability analysis \cite{dietrich2025data}); however, their use in switched system identification has not been investigated yet to the best of our knowledge.

An important contribution of our work is an extensive comparison of the direct and indirect approaches for switched linear systems stability analysis.
Similar comparisons have been conducted for LTI system control in \cite{krishnan2021direct,dorfler2023bridgingdirectvsindirect,dorfler2023certaintyequivalence}, while a broader perspective on this methodological question is provided in \cite{dorfler2023directvsindirect}. These works show that, for LTI systems, direct and indirect approaches can be related through regularization or relaxation techniques, which in some cases lead to equivalent formulations, and that they exhibit trade-offs in data efficiency, asymptotic performance, and robustness.
Our work differs from these studies in that we focus on switched linear systems, which introduces additional challenges both for system identification and for stability certification.
We show through numerical experiments that the current state of the art provides direct and indirect techniques that are not equivalent in general, each with respective strengths and weaknesses.

\subsection*{Notation}
$\SPn$ denotes the set of $n\times n$ symmetric positive definite matrices.
For $N\in\Ne$, $[N]$ denotes the set $\{1,\ldots,N\}$. Given $x \in \mathbb{R}^n$, $\|x\|_2\coloneqq\sqrt{x^{\top}x}$ denotes its 2-norm. We denote the unit 2-norm sphere in $\Re^n$ by $\Sph^{n-1}$.

\section{Problem Statement}\label{sec:problem_statement}

We consider a discrete-time \emph{switched linear system} (SLS) whose dynamics is described by
\begin{equation}\label{eq:sys}
\xi(t+1) \in \{A\xi(t) : A\in\calA\},
\end{equation}
wherein $\calA=\{A_1,\ldots,A_m\}\subseteq\Re^\nxn$ is a set of $m$ matrices in $\Re^\nxn$.
Each matrix of $\calA$ is called a \emph{mode} of the system.
A \emph{trajectory} of this system is a function $\xi:\Ne\to\Re^n$ such that for all $t\in\Ne$, the condition in \eqref{eq:sys} holds.
Since $\calA$ fully characterizes system \eqref{eq:sys}, in the following, we will refer to it simply by $\calA$.
%Unless said otherwise, the system is always in dimension $n$ and with $m$ modes.

We say that $\calA$ is \emph{asymptotically stable} if all trajectories converge to the origin.
The rate of growth of the trajectories is called the joint spectral radius (JSR) of $\calA$:

\begin{defn}\label{def:jsr-rate-stability}
The \emph{joint spectral radius} of $\calA$, denoted by $\rho(\calA)$, is the infimum of all $r\geq0$ for which there exists $C\geq1$ such that every trajectory $\xi$ of $\calA$ satisfies that for all $t\in\Ne$, $\lVert\xi(t)\rVert\leq Cr^t\lVert\xi(0)\rVert$.
\end{defn}

The JSR is notoriously difficult to approximate, even when the matrices in $\calA$ are known \cite{jungers2009thejoint}.
One way to upper bound it is to compute the contraction rate of the system with respect to a given norm:

\begin{defn}\label{def:quad-contraction}
Given a SLS $\calA$ and a norm $\lVert\cdot\rVert:\Re^n\to\Re_{\geq0}$.
The \emph{contraction rate} of $\calA$ with respect to $\lVert\cdot\rVert$ is defined by
\[
\gamma(\lVert\cdot\rVert,\calA)=\max_{x\in\Re^n\setminus\{0\},\,A \in\calA}\frac{\lVert Ax\rVert}{\lVert x\rVert}.
\]\vskip0pt
\end{defn}
The following result is standard in the literature (see, e.g., \cite[Proposition 1.4]{jungers2009thejoint}):\footnote{In fact, it holds that $\rho(\calA)=\inf_{\lVert\cdot\rVert}\gamma(\lVert\cdot\rVert,\calA)$, where the infimum is over all norms on $\Re^n$ \cite[Proposition 1.4]{jungers2009thejoint}.}

\begin{prop}\label{prop:upper-bound}
Given a SLS $\calA$ and a norm $\lVert\cdot\rVert:\Re^n\to\Re_{\geq0}$, it holds that $\rho(\calA)\leq\gamma(\lVert\cdot\rVert,\calA)$.
\end{prop}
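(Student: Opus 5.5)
The plan is to unroll the contraction along an arbitrary trajectory and then use equivalence of norms on $\Re^n$ to pass back to the norm in Definition~\ref{def:jsr-rate-stability}. Write $\gamma\coloneqq\gamma(\lVert\cdot\rVert,\calA)$.

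The first step is a one-step contraction bound. By Definition~\ref{def:quad-contraction}, every $A\in\calA$ and every $x\neq0$ satisfy $\lVert Ax\rVert\leq\gamma\lVert x\rVert$. The same inequality holds trivially for $x=0$. The maximum in the definition is attained because the ratio is homogeneous of degree zero, so it suffices to maximize over the unit sphere of $\lVert\cdot\rVert$, which is compact, and $\calA$ is finite.

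Next, fix any trajectory $\xi$ of $\calA$. For each $t$ there is some $A_{\sigma(t)}\in\calA$ with $\xi(t+1)=A_{\sigma(t)}\xi(t)$, so the first step gives $\lVert\xi(t+1)\rVert\leq\gamma\lVert\xi(t)\rVert$. By induction on $t$,
\[
\lVert\xi(t)\rVert\leq\gamma^t\lVert\xi(0)\rVert \quad\text{for all } t\in\Ne.
\]

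The remaining step is the only real subtlety. Definition~\ref{def:jsr-rate-stability} uses a fixed reference norm (say $\lVert\cdot\rVert_2$), which need not coincide with the given norm $\lVert\cdot\rVert$. All norms on $\Re^n$ are equivalent, so there are constants $0<c_1\leq c_2$ with $c_1\lVert x\rVert_2\leq\lVert x\rVert\leq c_2\lVert x\rVert_2$ for all $x$. Combining this with the bound above gives
\[
\lVert\xi(t)\rVert_2\leq \frac{c_2}{c_1}\,\gamma^t\lVert\xi(0)\rVert_2 .
\]
Set $C\coloneqq c_2/c_1\geq1$; this constant is the same for every trajectory. Hence $r=\gamma$ is admissible in Definition~\ref{def:jsr-rate-stability}, and taking the infimum yields $\rho(\calA)\leq\gamma$.

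There are no other obstacles. The case $\gamma=0$ is covered, since then all trajectories vanish after one step. If the definition is read with the norm $\lVert\cdot\rVert$ itself, then $C=1$ works directly.
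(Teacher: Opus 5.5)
Your proof is correct. The paper gives no proof of its own and only cites \cite[Proposition 1.4]{jungers2009thejoint}, whose argument is the same iterated one-step bound (submultiplicativity of the induced norm over products of modes). Your norm-equivalence step correctly supplies the trajectory-independent constant $C=c_2/c_1$ needed to match the trajectory-based Definition~\ref{def:jsr-rate-stability}.
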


\begin{rem}\label{rem:lyap}
When $\gamma(\lVert\cdot\rVert,\calA) < 1$, the function $x\mapsto\lVert x\rVert$ is a Lyapunov function for the system.
%In general, it can be understood as a Lyapunov function for the system $\calA$ with each matrix divided by $\gamma(\lVert\cdot\rVert,\calA)$.
\end{rem}

In this paper, following previous work on data-driven control of SLS (e.g., \cite{kenanian2019data,berger2021chanceconstrained,wang2021datadriven,banse2024learning}), we focus on norms induced by a quadratic function, called hereafter \emph{quadratic norms}.
Namely, given a positive definite matrix $P$, we define the associated norm by $\lVert x\rVert_P\coloneqq(x^\top Px)^{1/2}$.
For simplicity of notation, we write $\gamma(P,\calA)=\gamma(\lVert\cdot\rVert_P,\calA)$.

We define the \textit{best quadratic approximation} of the JSR:
\[
\rhoquad(\calA) = \inf_{P\in\SPn} \gamma(P,\calA).
\]
To ensure existence of a minimizer and guarantee that it is well conditioned, we restrict the set to \emph{$\delta$-conditioned} positive definite matrices:
\begin{equation}\label{prob:whitebox-problem}
    \rhoquad(\calA,\delta) = \inf_{\substack{P\in\SPn\\I\preceq P\preceq \delta I}} \gamma(P,\calA),
\end{equation}
where $\delta \geq 1$ is the conditioning parameter.
We denote by $P_0(\calA,\delta)$ the matrix realizing the minimum in \eqref{prob:whitebox-problem}.\footnote{\label{fn:exis_unique}By compactness of the domain of $\delta$-conditioned matrices, the minimizer always exists.
If it is not unique, we can use a tie-breaking rule; see, e.g., \cite{berger2021chanceconstrained}.}

By Proposition~\ref{prop:upper-bound}, it holds that $\rho(\calA) \leq \rhoquad(\calA,\delta)$; see also \cite[Theorem 5]{kenanian2019data}.

\begin{rem}
Note that $\rhoquad$ can be computed efficiently using semi-definite programming (see, e.g., \cite{jungers2009thejoint}).
\end{rem}
%Indeed, for $\calA$, $\gamma\geq0$, and $P\in\SPn$, the inequality $\rho(\calA,P)\leq\gamma$ can be reformulated as the set of linear matrix inequalities (LMIs):
%\begin{equation}\label{eq:white-box-jsr}
%A^\top P A \preceq \gamma^2 P, \quad \forall\,A\in\calA.
%\end{equation}
%Consequently, $\rhoquad(\calA,\delta)$ can be approximated with accuracy $\epsilon>0$ efficiently---polynomial in time with respect to $n$, $\lvert\calA\rvert$ and $\log(\epsilon)$---as the problem can be formulated as a quasi-convex optimization problem.
%\end{rem}

\subsection{Data-driven analysis and random data collection}

In the data-driven setting, the system matrices are unknown so that the JSR cannot be computed directly as in \eqref{prob:whitebox-problem}. %--\eqref{eq:white-box-jsr}.
Instead, we rely on data consisting of noisy one-step trajectories.

More precisely, we consider an unknown SLS $\calA = \{A_1,\dots,A_m\}$.
An \emph{experiment} consists of initializing the system at a state $x \in \mathbb{S}^{n-1}$ and observing the state after one time step. Due to the switching and measurement noise, the observation satisfies
\[
y = A_j x + w,
\]
where $j \in [m]$ denotes the mode and $w \in \mathbb{R}^n$ the noise.

To formalize the data-generation process, we assume that each experiment produces a triplet $(x,j,w)$ drawn independently from an underlying probability distribution $\pi = \mustate \times \mumode \times \munoise$ on the set $\Delta \coloneqq \mathbb{S}^{n-1} \times [m] \times \mathcal{W}$.
The components of this product distribution have the following interpretation:
\begin{itemize}
\item $\mustate$ is the distribution of the initial condition $x$.
We assume that $x$ is sampled uniformly on $\mathbb{S}^{n-1}$.
\item $\mumode$ is the unknown distribution governing the switching mechanism. It models the probability with which each mode is activated.
\item $\munoise$ is the unknown noise distribution.
\end{itemize}\vskip0pt

\begin{rem}
The assumption on $\mustate$ can be interpreted in two ways:
(i) as a design choice, in an experimental setting where the experimenter actively initializes the system with uniformly distributed states (see for instance the adaptive-sampling framework in \cite{vuille2024data}); or
(ii) as a modeling assumption in a passive data-collection setting (such as in \cite{kenanian2019data,berger2021chanceconstrained,wang2021datadriven,banse2024learning}).
\end{rem}

With the assumptions above, repeating the experiment $N$ times yields an i.i.d. sample set
\begin{equation}\label{eq:sample_set_equation}
\omega_N = \{(x_i,j_i,w_i)\}_{i=1}^N.
\end{equation}
For each experiment $i\in[N]$, only the ``input--output'' pair $(x_i,y_i)$, where $y_i = A_{j_i}x_i + w_i$, is observable; the mode $j_i$ and the noise realization $w_i$ remain latent.
This gives the \emph{observation set}
\begin{equation}\label{eq:observation-set}
\calD\coloneqq \{(x_i,y_i)\}_{i=1}^N, \quad y_i = A_{j_i}x_i + w_i \quad \forall\,i\in[m].
\end{equation}
The following assumptions will be used to establish the probabilistic stability guarantees:

\begin{assum}[Mode excitation]\label{assump-mode-sampling}
There exists $\alpha>0$ such that each mode has probability at least $\alpha$ of being sampled, i.e., for all $j \in [m]$, $\mumode(j) \geq \alpha$.
\end{assum}

\begin{assum}[Noise tail bound]\label{assum:noise}
For all $W\geq0$, there exists a known bound $p(W)$ such that
\[
\munoise(\{ w \in \mathbb{R}^n : \|w\|_2 > W \}) \leq p(W).
\]\vskip0pt
\end{assum}
Note that Assumption~\ref{assum:noise} does not require full knowledge of $\munoise$ but only an upper bound on the tail probability of the noise. Two examples are given in Appendix~\ref{appendix:exmp}.

\subsection{A useful result}
We recall the following result from \cite[Theorem 15]{kenanian2019data}, stated here under a generalized form that will be useful to obtain probabilistic upper bounds. This result allows to translate any upper bound on the measure of the set of points where the quadratic norm fails to be contractive (Eq.~\ref{eq:violation-contraction} below) into an upper bound on the JSR:
% In particular, stability can be certified whenever this upper bound is smaller than one.}

\begin{prop}\label{prop:kenenian}
Consider a SLS $\calA$.
Let $\gamma\geq 0$, $P\in\SPn$ and $\nu\in(0,1)$.
For each $j\in[m]$, define 
\begin{equation}\label{eq:violation-contraction}
\mathcal{V}_j\coloneqq \left\{ x \in \mathbb{S}^{n-1} :\lVert A_j x\rVert_P> \gamma \lVert x\rVert_P\right\}.
\end{equation}
Assume that for all $j \in [m]$, $\mustate\big(\mathcal{V}_j\big) \leq \nu$.
Then,
\[
\rho(\calA)\leq \frac{\gamma}{\zeta\!\left(\frac\nu2\kappa(P)\right)},
\]
where 
\begin{itemize}
\item $\kappa(P) \coloneqq \sqrt{\det(P)/\lambda_{\mathrm{min}}(P)^n}$,
\item $\zeta(x) \coloneqq \sqrt{1-I^{-1}(2x;(n-1)/2;1/2)}$ for $x\in(0,1)$.
\item $I(\cdot;a;b)$ is the regularized incomplete beta function, defined by
$
I(x;a;b) \coloneqq \frac{\int_{0}^x t^{a-1} (1-t)^{b-1} \mathrm{d}t}{\int_{0}^1 t^{a-1} (1-t)^{b-1} \mathrm{d}t}.
$
\end{itemize}
\end{prop}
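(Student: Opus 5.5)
The plan is to reduce to a geometric statement about spherical caps, as in the proof of \cite[Theorem 15]{kenanian2019data}. First change coordinates so that the quadratic norm becomes Euclidean. Write $P = L^\top L$ and set $\bar A_j = L A_j L^{-1}$. Then $\lVert A_j x\rVert_P > \gamma\lVert x\rVert_P$ is equivalent to $\lVert \bar A_j z\rVert_2 > \gamma \lVert z\rVert_2$ with $z = Lx$. Since $\rho$ is invariant under similarity and $\gamma(P,\calA)=\gamma(I,\bar\calA)$, it suffices to show that $\lVert \bar A_j\rVert_2 \le \gamma/\zeta(\tfrac{\nu}{2}\kappa(P))$ for every $j$, and then invoke Proposition~\ref{prop:upper-bound}.

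Fix $j$ and let $z^\star\in\Sph^{n-1}$ be a top right singular vector of $\bar A_j$, so that $\lVert\bar A_j z^\star\rVert_2 = \lVert \bar A_j\rVert_2 =: \sigma$. For any unit $z$ with $|z^\top z^\star| \ge c$, decompose $z = (z^\top z^\star) z^\star + z_\perp$. Because $\bar A_j^\top\bar A_j z^\star = \sigma^2 z^\star$, the images of $z^\star$ and $z_\perp$ are orthogonal, and hence $\lVert \bar A_j z\rVert_2 \ge \sigma |z^\top z^\star| \ge \sigma c$. Consequently, if $\sigma c > \gamma$, the whole double cap $C_c=\{z\in\Sph^{n-1}: |z^\top z^\star|\ge c\}$ lies in the violation set in $z$-coordinates. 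Its uniform measure is $I(1-c^2;\tfrac{n-1}{2};\tfrac12)$, the standard cap-area formula: each single cap has measure $\tfrac12 I(1-c^2;\cdot)$.

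Next, transfer the measure back to $x$-coordinates. The violation set $\calV_j$ is a cone intersected with the sphere, equal to the radial projection of $L^{-1}C_c$. The measure $\mustate$ of a conic set equals the normalized Lebesgue volume of its intersection with the unit ball. The map $x\mapsto Lx$ distorts these conic measures by at most the ratio between $\det(L^{-1})$ and the worst radial stretching, namely $\lambdamin(P)^{n/2}$. This yields $\mustate(\calV_j) \ge \mathrm{vol}(C_c)/\kappa(P)$ with $\kappa(P)=\sqrt{\det P/\lambdamin(P)^n}$, as in \cite{kenanian2019data}. This step is the main technical obstacle: one must check carefully that the Jacobian bound for cones under a linear map gives exactly this $\kappa(P)$. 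I would handle it by writing the conic volume as $\int_{\text{cone}\cap B}dx$, mapping it by $L$, and bounding $\lVert Lx\rVert$ from below by $\sqrt{\lambdamin(P)}\lVert x\rVert$ so that the image of the unit ball contains a ball of radius $\sqrt{\lambdamin(P)}$.

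Finally, combine the bounds. The hypothesis $\mustate(\calV_j)\le\nu$ forces $I(1-c^2;\tfrac{n-1}{2};\tfrac12)\le \nu\kappa(P)$ whenever $\sigma c>\gamma$, where the factor $\tfrac12$ per single cap has been tracked. Choose $c$ so that the double-cap measure exceeds $\nu\kappa(P)$, i.e. $c$ slightly less than $\zeta(\tfrac{\nu}{2}\kappa(P))=\sqrt{1-I^{-1}(\nu\kappa(P))}$. This gives a contradiction unless $\sigma c\le\gamma$. Letting $c\uparrow\zeta$ yields $\sigma\le\gamma/\zeta(\tfrac{\nu}{2}\kappa(P))$ for every $j$. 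Hence $\rho(\calA)\le\gamma(I,\bar\calA)\le\gamma/\zeta(\tfrac{\nu}{2}\kappa(P))$. If $\nu\kappa(P)\ge1$, the bound is vacuous or undefined and there is nothing to prove.
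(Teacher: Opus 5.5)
Your proof is correct. It follows the paper's skeleton: Cholesky whitening $P=L^\top L$, the measure-distortion bound $\mustate(\Pi(LU))\le\kappa(P)\,\mustate(U)$ for conic sets, and the double-cap measure $I(1-c^2;\tfrac{n-1}{2};\tfrac12)$. Where it departs is the central geometric step.

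\emph{The paper's route.} It works with the \emph{good} set $\mathbb{S}^{n-1}\setminus V'_j$. It argues that the convex hull of this set contains a ball of radius $\xi_j$, and uses convexity of $z\mapsto\lVert B_jz\rVert_2$ to get $\lVert B_j\rVert_2\le\gamma/\xi_j$. It then imports $\xi_j\ge\zeta(\mustate(V'_j)/2)$ from \cite[Proposition 3]{kenanian2019data}, and the $\kappa(P)$ factor from \cite[Theorem 15]{kenanian2019data}.

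\emph{Your route.} You work with the \emph{bad} set instead. The top right singular vector $z^\star$, together with the orthogonality $\bar A_jz^\star\perp\bar A_jz_\perp$, exhibits an explicit double cap $\{|z^\top z^\star|\ge c\}$ inside the violation set as soon as $\sigma c>\gamma$. This is the dual, supporting-hyperplane form of the paper's inscribed-ball statement.

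\emph{What each buys.}
\begin{itemize}
\item Your argument is self-contained. It makes the origin of the factor $\tfrac12$ in $\zeta(\tfrac\nu2\kappa(P))$ transparent: symmetry of the violation set gives a double cap.
\item You prove the $\kappa(P)$ distortion rather than citing it, and your Jacobian sketch is right. The inclusion $L^{-1}B\subseteq\lambdamin(P)^{-1/2}B$ yields exactly $\sqrt{\det P}/\lambdamin(P)^{n/2}$, and it goes in the direction you need, $\mustate(\Pi(L^{-1}C_c))\ge\mustate(C_c)/\kappa(P)$.
\item You avoid the convex-combination step. As written in the paper with two points, that step would in general need up to $n+1$ points by Carath\'eodory.
\item The paper's route is more modular. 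It only uses convexity and homogeneity of the contraction function and reuses an existing lemma. Your singular-vector step leans on the Euclidean inner product, though a norming functional would recover it in general.
\end{itemize}

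One wording fix: $\calV_j$ \emph{contains} the radial projection of $L^{-1}C_c$ when $\sigma c>\gamma$; it is not equal to it. Containment is all you use, so the argument is unaffected.
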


The proof is adapted from \cite[Theorem 15]{kenanian2019data}.
For completeness, it is given in Appendix~\ref{appendix:proof_kenenian}.

Key steps in the proofs of both the indirect and the direct approaches guarantees will be to bound the probability of $\calV_j$ in \eqref{eq:violation-contraction} in order to apply Proposition~\ref{prop:kenenian}.

\section{Indirect Approach}\label{sec:indirect_approach}

In this section, we present our first main contribution: an indirect-approach framework to bound the JSR with finite-sample probabilistic guarantees.
First, we explain how an identified SLS model along with a bound on the identification error can lead to a bound on the JSR of the true system (Theorem~\ref{thm:indirect-jsr-bound}); then, we explain how the identification error can be bounded by using tools from statistical learning (Proposition~\ref{prop:binomial-tail-bound}).

\subsection{From system identification to stability guarantees}\label{ssec:si-to-stability}

In the indirect approach, given an observation set $\calD$ as in \eqref{eq:observation-set}, we use system identification to learn a SLS model.
Then, we compute a quadratic norm and the associated contraction rate for this model. The main question is: since the contraction analysis is performed on the model, what guarantees can we obtain on the true system?
As we show in this section, we can transfer the guarantees from the model to the true system by bounding the discrepancy between the two dynamics.

In the following of this section, we let $\hat{\mathcal{A}} \coloneqq \{\hat{A}_1,\hdots,\hat{A}_m\}$ denote the identified SLS model.\footnote{For simplicity, we assume that the number of modes of the true and the identified systems are the same, but our framework can be easily extended to the more general setting.}

\begin{defn}\label{def:pred-err}
Given a SLS $\mathcal{A}$, a SLS model $\hat{\mathcal{A}}$ and a sample point $(x,j,w) \in \Delta$, we define the \emph{prediction error} as
\begin{equation*}  
e_{\mathcal{A},\hat{\mathcal{A}}}(x,j,w) \coloneqq \min_{k \in [m]} \|\hat{A}_k x-A_jx+w\|_2.
\end{equation*}
\end{defn}

The prediction error above is defined for a specific sample $(x,j,w)$.
We define the associated cumulative distribution of the prediction error as follows:

\begin{defn}\label{def:tail-pred-err}
Given a SLS $\mathcal{A}$, a SLS model $\hat{\mathcal{A}}$ and $\epsilon > 0$, the \emph{complementary cumulative distribution of the prediction error} is
\[ \qerror_{\mathcal{A},\hat{\mathcal{A}}}(\epsilon)
\coloneqq \pi\left(\left\{(x,j,w) \in \Delta : e_{\mathcal{A},\hat{\mathcal{A}}}(x,j,w) > \epsilon\right\}\right).
\]\vskip0pt
\end{defn}
The theorem below assumes an upper bound $\hat{q}$ on $\qerror_{\mathcal{A},\hat{\mathcal{A}}}(\epsilon)$ (we explain in Section \ref{ssec:si-bound} how to compute such an upper bound with finite-sample guarantees).
The theorem shows how this bound can be propagated into the quadratic norm computed for the model in order to obtain an upper bound on the JSR of the true system:

\begin{thm}\label{thm:indirect-jsr-bound}
Given a SLS $\mathcal{A}$, a model SLS $\hat{\mathcal{A}}$ and $\delta \geq 1$, let $P\coloneqq P_0(\hat{\mathcal{A}},\delta)$ and $\rhoquad(\hat{\mathcal{A}},\delta)$ be solutions of \eqref{prob:whitebox-problem}.
Let $\epsilon >0$ and $W\geq0$.
Let Assumptions \ref{assump-mode-sampling} and \ref{assum:noise} hold.\footnote{We remind that Assumptions \ref{assump-mode-sampling} and \ref{assum:noise} define the quantities $\alpha$ and $p(W)$ used in the definition of $\overline{\rho}(\hat{q};\hat{\mathcal{A}},\delta)$.}
Let $\hat{q}\geq\qerror_{\mathcal{A},\hat{\mathcal{A}}}(\epsilon)$.
The following %inequality 
holds:
\begin{equation*}
\rho(\mathcal{A}) \leq \underbrace{\frac{\rhoquad(\hat{\mathcal{A}},\delta)+(\epsilon+W)\sqrt{\kappacond(P)}}{\zeta\left(\frac{\hat{q}}{2(1-p(W))\alpha}\kappa(P)\right)}}_{\eqqcolon\:\overline{\rho}(\hat{q};\hat{\mathcal{A}},\delta)},
\end{equation*}
where
\begin{itemize}
\item $\kappacond(P) \coloneqq \lambdamax(P)/\lambdamin(P)$,
\item $\zeta(x)$ and $\kappa(P)$ are defined in Proposition \ref{prop:kenenian}.
\end{itemize}
\end{thm}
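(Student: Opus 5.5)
The plan is to apply Proposition~\ref{prop:kenenian} to the true system $\calA$ with the norm $P = P_0(\hat{\calA},\delta)$. The contraction level will be
\[
\gamma \coloneqq \rhoquad(\hat{\calA},\delta)+(\epsilon+W)\sqrt{\kappacond(P)},
\]
and the violation level will be $\nu \coloneqq \hat q/((1-p(W))\alpha)$. The conclusion of Proposition~\ref{prop:kenenian} is then exactly the claimed bound $\overline{\rho}(\hat q;\hat{\calA},\delta)$. So the work reduces to showing that $\mustate(\calV_j)\leq\nu$ for every $j\in[m]$, where $\calV_j$ is defined in \eqref{eq:violation-contraction} with this $\gamma$ and $P$. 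If $\nu\geq 1$, the bound is vacuous or outside the domain of $\zeta$; we treat that case by convention and assume $\nu<1$.

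\textbf{Deterministic step.} Fix $x\in\Sph^{n-1}$ and $j\in[m]$. Suppose there exist some $w$ with $\|w\|_2\leq W$ and some $k$ with $\|\hat A_k x - A_j x + w\|_2\leq\epsilon$. Then $\|A_jx-\hat A_kx\|_2\leq \epsilon+W$. Using $\|v\|_P\leq\sqrt{\lambdamax(P)}\|v\|_2$ and $\|x\|_2\leq \|x\|_P/\sqrt{\lambdamin(P)}$, we get
\[
\|A_jx\|_P\leq \|\hat A_kx\|_P + (\epsilon+W)\sqrt{\kappacond(P)}\,\|x\|_P .
\]
Moreover $\|\hat A_k x\|_P\leq \rhoquad(\hat{\calA},\delta)\|x\|_P$, since $P$ is the minimizer in \eqref{prob:whitebox-problem}. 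Together these give $\|A_jx\|_P\leq\gamma\|x\|_P$, so $x\notin\calV_j$. Contrapositively, $x\in\calV_j$ implies that for every $w$ with $\|w\|_2\leq W$ we have $e_{\calA,\hat{\calA}}(x,j,w)>\epsilon$.

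\textbf{Probabilistic step.} Write $B\coloneqq\{(x,j,w): e_{\calA,\hat{\calA}}(x,j,w)>\epsilon\}$, so that $\pi(B)\leq\hat q$. By the deterministic step, $B$ contains the product set $\calV_j\times\{j\}\times\{w:\|w\|_2\leq W\}$. Since $\pi$ is a product measure,
\[
\hat q\geq \pi(B)\geq \mustate(\calV_j)\,\mumode(j)\,\munoise(\{\|w\|_2\leq W\})\geq \mustate(\calV_j)\,\alpha\,(1-p(W)),
\]
where the last inequality uses Assumptions~\ref{assump-mode-sampling} and~\ref{assum:noise}. This yields $\mustate(\calV_j)\leq\nu$, and Proposition~\ref{prop:kenenian} concludes the proof.

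The main obstacle is the deterministic step. One must get the norm-conversion constant exactly as $\sqrt{\kappacond(P)}$, which the two inequalities above handle. One must also make the quantifier structure precise: the "for all small $w$" statement is what makes the product-set inclusion valid, and the product structure of $\pi$ is what lets us divide out $\alpha(1-p(W))$. Measurability of $\calV_j$ is routine, since it is an open set.
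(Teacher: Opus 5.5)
Your proof is correct and follows essentially the paper's route: the same triangle-inequality and norm-conversion argument giving the $\sqrt{\kappacond(P)}$ factor, the same product-measure bound yielding $\mustate(\calV_j)\le \hat q/((1-p(W))\alpha)$, and then Proposition~\ref{prop:kenenian}. The only difference is cosmetic. The paper first bounds $\mustate(\Omega_j)$ using $\pi(\Omega)=\sum_j \mustate(\Omega_j)\mumode(j)$ and then shows $\calV_j\subseteq\Omega_j$, whereas you fold both steps into the single inclusion $\calV_j\times\{j\}\times\{w:\|w\|_2\le W\}\subseteq\{e_{\mathcal{A},\hat{\mathcal{A}}}>\epsilon\}$.
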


To prove Theorem \ref{thm:indirect-jsr-bound}, we will need the following lemma.
Intuitively, the lemma propagates the error coming from the noise and the system identification error into an upper bound on the measure of the set of points for which the contraction of the quadratic norm computed with the model does not hold in the true system.
This upper bound will be used to apply Proposition \ref{prop:kenenian} to obtain the upper bound on the JSR in Theorem \ref{thm:indirect-jsr-bound}.

\begin{lem}\label{lem:indirect-ub}
Let $\mathcal{A}$, $\hat{\mathcal{A}}$, $\delta$, $P$, $\rhoquad(\hat{\mathcal{A}},\delta)$, $\epsilon$, $W$ and $\hat{q}$ be as in Theorem \ref{thm:indirect-jsr-bound}.
Let Assumptions \ref{assump-mode-sampling} and \ref{assum:noise} hold.
Let $j\in[m]$ and $\calV_j$ be as in \eqref{eq:violation-contraction} with $\gamma \coloneqq \rhoquad(\hat{\mathcal{A}},\delta) + (\epsilon + W)\sqrt{\kappacond(P)}$.
Then,
\begin{equation*}
\mustate(\calV_j) \leq \frac{\hat{q}}{(1-p(W))\alpha}.
\end{equation*}
\end{lem}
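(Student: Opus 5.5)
The idea is to show that every $x\in\calV_j$, paired with mode $j$ and any noise $w$ of norm at most $W$, is a sample with prediction error larger than $\epsilon$. The measure of such triples is then at least $\mumode(j)\,(1-p(W))\,\mustate(\calV_j)$. We use the product structure $\pi=\mustate\times\mumode\times\munoise$, compare with $\hat q\geq\qerror_{\calA,\hat\calA}(\epsilon)$, and divide by $\alpha(1-p(W))$ using Assumptions~\ref{assump-mode-sampling} and~\ref{assum:noise}.

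\textbf{Key step (contrapositive).} Fix $x\in\mathbb{S}^{n-1}$ and $w$ with $\|w\|_2\le W$, and suppose $e_{\calA,\hat\calA}(x,j,w)\le\epsilon$. Then there is $k\in[m]$ with $\|\hat A_kx-A_jx+w\|_2\le\epsilon$, so $\|A_jx-\hat A_kx\|_2\le\epsilon+W$. By the triangle inequality in $\lVert\cdot\rVert_P$,
\[
\|A_jx\|_P\le\|\hat A_kx\|_P+\|A_jx-\hat A_kx\|_P\le \rhoquad(\hat\calA,\delta)\|x\|_P+\sqrt{\lambdamax(P)}\,(\epsilon+W).
\]
Here the first term uses the definition of $P=P_0(\hat\calA,\delta)$, which gives $\gamma(P,\hat\calA)=\rhoquad(\hat\calA,\delta)$. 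Since $\|x\|_2=1$, we have $\|x\|_P\ge\sqrt{\lambdamin(P)}$, hence $\sqrt{\lambdamax(P)}\le\sqrt{\kappacond(P)}\,\|x\|_P$. This yields $\|A_jx\|_P\le\gamma\|x\|_P$, i.e.\ $x\notin\calV_j$. Consequently
\[
\calV_j\times\{j\}\times\{w:\|w\|_2\le W\}\subseteq\{(x,j',w):e_{\calA,\hat\calA}(x,j',w)>\epsilon\}.
\]

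\textbf{Measure step.} Taking $\pi$ of both sides gives
\[
\mustate(\calV_j)\,\mumode(j)\,\munoise(\{\|w\|_2\le W\})\le\qerror_{\calA,\hat\calA}(\epsilon)\le\hat q.
\]
Substituting the bounds $\mumode(j)\ge\alpha$ and $\munoise(\{\|w\|_2\le W\})\ge1-p(W)$ gives the claim. This assumes $p(W)<1$; otherwise the bound is vacuous or infinite and the statement is trivial.

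\textbf{Main obstacle.} The only delicate point is the norm conversion: the identification error is controlled in the Euclidean norm, while contraction is measured in $\lVert\cdot\rVert_P$. The normalization $\|x\|_2=1$ must be used to turn the additive error into a multiplicative factor $\sqrt{\kappacond(P)}$. Measurability of $\calV_j$ and of the error set is routine, since both are defined by continuous inequalities.
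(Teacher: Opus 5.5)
Your proof is correct and is essentially the paper's argument. The paper routes it through the intermediate set $\Omega_j=\{x\in\mathbb{S}^{n-1}:\min_{k}\|\hat A_kx-A_jx\|_2>\epsilon+W\}$: it first bounds $\mustate(\Omega_j)\le \hat q/((1-p(W))\alpha)$ via the product structure of $\pi$, then shows $\calV_j\subseteq\Omega_j$ with the same $P$-norm triangle inequality and $\sqrt{\kappacond(P)}$ conversion you use, whereas you compose these two inclusions directly into $\calV_j\times\{j\}\times\{\|w\|_2\le W\}\subseteq S$, which is a slightly more streamlined version of the same idea.
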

\begin{pf}
Let
\[
\Omega_j \coloneqq \left\{x \in \mathbb{S}^{n-1}:\min_{k \in [m]} \|\hat{A}_kx-A_jx\|_2>\epsilon +W\right\}.
\]
We first prove the following bound:
\begin{equation}\label{eq:bound-omega}
\mustate(\Omega_j) \leq \frac{\hat{q}}{(1-p(W))\alpha}.
\end{equation}
To prove this, define
\begin{align*}
&\Omega\coloneqq \Big\{(x,j,w) \in \Delta : \min_{k \in [m]} \|\hat{A}_kx-A_jx\|_2>\epsilon+W\Big\}, \\
&\Vtilde\coloneqq \Big\{(x,j,w) \in \Delta : \min_{k \in [m]} \|\hat{A}_kx-A_jx+w\|_2>\epsilon\Big\}, \\
&\Vover\coloneqq \Big\{(x,j,w) \in \Delta : (x,j,w)\in\Omega \:\land\: \|w\|_2 \leq W\Big\}.
\end{align*}
Note that $\pi(\Vtilde)=\qerror_{\calA,\hat\calA}(\epsilon)$.
We prove that $\Vover \subseteq \Vtilde$, which implies $\pi(\Vover) \leq \pi(\Vtilde)$.
Therefore, let $(x,j,w) \in \Vover$.
By the triangle inequality,
\begin{align*}
\epsilon + W &< \min_{k \in [m]} \|\hat{A}_kx-A_jx\|_2 \\
%&=\min_{k \in [m]} \|\hat{A}_kx-A_jx+w-w\|_2 \\
&\leq \min_{k \in [m]} \|\hat{A}_kx-A_jx+w\|_2 + \|w\|_2 \\
&\leq \min_{k \in [m]} \|\hat{A}_kx-A_jx+w\|_2 + W.
\end{align*}
\noindent Hence, 
$
\epsilon < \min_{k \in [m]} \|\hat{A}_kx-A_jx+w\|_2,
$
so that $(x,j,w) \in \Vtilde$. This proves $\Vover\subseteq \Vtilde$.

Then, notice that
\begin{align*}
\pi(\Vover) &= \pi(\Omega) \cdot \munoise(\{w \in \mathbb{R}^n:\|w\|_2 \leq W\})\\
&\geq \pi(\Omega) \cdot (1-p(W)),
\end{align*}
where the latter comes from Assumption \ref{assum:noise}. This implies
\[
\pi(\Omega)\leq \frac{\pi(\Vover)}{1-p(W)} \leq \frac{\pi(\Vtilde)}{1-p(W)} = \frac{\qerror_{\mathcal{A},\hat{\mathcal{A}}}(\epsilon)}{1-p(W)} \leq  \frac{\hat{q}}{1-p(W)}.
\]
Since $\pi(\Omega) = \sum_{j=1}^m \mustate(\Omega_j)\mumode(j)$, it follows that
\[
\mustate(\Omega_j)\mumode(j) \leq \frac{\hat{q}}{1-p(W)}.
\]
Thus, by Assumption \ref{assump-mode-sampling},
\[
\mustate(\Omega_j) \leq \frac{\hat{q}}{(1-p(W))\mumode(j)}\leq \frac{\hat{q}}{(1-p(W))\alpha}.
\]
This proves \eqref{eq:bound-omega}. To prove the rest of the lemma, we prove that $\calV_j \subseteq \Omega_j$, or equivalently that 
\[
x \in (\mathbb{S}^{n-1} \setminus \Omega_j) \implies x \in (\mathbb{S}^{n-1} \setminus \calV_j). 
\]
It will follow that $\mustate(\calV_j) \leq \mustate(\Omega_j) \leq \frac{\hat{q}}{(1-p(W))\alpha}$.
Given $x \in (\mathbb{S}^{n-1} \setminus \Omega_j)$, let $k\in[m]$ be such that $\|A_jx-\hat{A}_kx\|_2\leq \epsilon+W$.
By the triangle inequality,
\begin{equation*}
        \|A_jx\|_P \leq \|\hat{A}_kx\|_P+\|A_jx-\hat{A}_kx\|_P
\end{equation*}
By the definition of $P$, it holds that
\[
\|\hat{A}_{k}x\|_P \leq \rhoquad(\hat\calA,\delta) \|x\|_P.
\]
Furthermore, it holds that
\begin{align*}
\|A_jx-\hat{A}_{k}x\|_P &\leq \sqrt{\lambdamax(P)} \|A_jx-\hat{A}_{k}x\|_2 \\
&\leq \sqrt{\lambdamax(P)} (\epsilon+W) \\
&\leq \sqrt{\frac{\lambdamax(P)}{\lambdamin(P)}} (\epsilon+W) \|x\|_P.
\end{align*}
By combining the two inequalities above, we obtain
\[
\|A_jx\|_P \leq \left(\rhoquad(\hat\calA,\delta) +\sqrt{\kappacond(P)}  (\epsilon+W)\right)\|x\|_P.
\]
Thus, $x\notin\calV_j$.
Hence, $\calV_j \subseteq \Omega_j$, concluding the proof.\qed
\end{pf}

{\renewcommand{\Elproofname}{Proof of Theorem \ref{thm:indirect-jsr-bound}.}
\begin{pf}
The proof follows by applying Proposition \ref{prop:kenenian} with $\nu \coloneqq\frac{\hat{q}}{(1-p(W))\alpha}$, which holds by Lemma \ref{lem:indirect-ub}.\qed
\end{pf}}
\subsection{Bounding the identification error}\label{ssec:si-bound}

Theorem \ref{thm:indirect-jsr-bound} allows us to obtain probabilistic bounds on the JSR through a bound on the complementary cumulative distribution of the prediction error, $\hat{q} \geq \qerror_{\mathcal{A},\hat{\mathcal{A}}}(\epsilon)$.
However, how do we compute such a bound on the distribution of the prediction error?
In this section, we propose a standard approach from statistical learning called the \textbf{binomial-tail inversion} to do this with finite-sample guarantees. This method, which belongs to the family of split-set (aka. holdout or validation-set) methods, splits the dataset into two parts: one for training and one for testing.
It then uses the binomial distribution that describes the number of samples in the testset for which the prediction error is larger than $\epsilon$. By inverting it, and comparing with the observed number of samples with prediction error larger than $\epsilon$, one can retrieve a probabilistic upper bound on $\qerror_{\mathcal{A},\hat{\mathcal{A}}}(\epsilon)$.

Concretely, binomial-tail inversion is a general framework to bound with prescribed confidence the probability of an event $S\subseteq\Omega$, where $(\Omega,\pi)$ is a probability space \cite{langford2005tutorial}.
It proceeds by taking $N$ i.i.d.~samples in $\Omega$ and computing how many of them belong to $S$.
In our case, given a SLS $\calA$, a SLS model $\hat\calA$ and $\epsilon>0$, the associated event is defined by
\begin{equation}\label{eq:event-set}
S\coloneqq\left\{(x,j,w) \in \Delta : e_{\mathcal{A},\hat{\mathcal{A}}}(x,j,w) > \epsilon\right\},
\end{equation}
i.e., it is the set of samples for which the prediction error is larger than $\epsilon$.
Note that $\pi(S)=\qerror_{\calA,\hat\calA}(\epsilon)$.
%\[
%\pi(S_{\hat\calA})=\qerror_{\calA,\hat\calA}(\epsilon).
%\]
For any sample set $\omega_N=\{(x_i,j_i,w_i)\}_{i=1}^N$, the number of samples in $S$ is denoted by
\begin{equation}\label{eq:N-unsat}
\begin{array}{rrl}
F(\omega_N) &\coloneqq& \sum_{i=1}^N \boldsymbol{1}_{(x_i,j_i,w_i)\in S} \\
&=& \#\{i\in[N] : e_{\mathcal{A},\hat{\mathcal{A}}}(x_i,j_i,w_i) > \epsilon\}.
\end{array}
\end{equation}
Binomial-tail inversion gives an upper bound on $\pi(S)$ from $F(\omega_N)$, that is correct with high probability:
%\footnote{Note that, by the law of large numbers, when $\omega_N$ is sampled i.i.d.~according to $\pi$, the ratio $\frac1N F_{\hat\calA}(\omega_N)$ gives a consistent estimator of $\pi(S_{\hat\calA})$. However, we need more than an \emph{asymptotic} guarantee; hence, the use of the binomial-tail inversion.}

\begin{prop}\label{prop:binomial-tail-bound}
Consider a SLS $\mathcal{A}$ and a SLS model $\hat{\calA}$. Let $\beta\in(0,1)$, $\epsilon>0$ and $F(\cdot)$ be as in \eqref{eq:N-unsat}.
For any $k\in\Ne$, $k\leq N$, let $\overline{q}^{\mathrm{bin}}_\beta(k)$ be the unique $q\in[0,1]$ such that $\Phi(q;k;N)=\beta$, where $\Phi$ is the cumulative distribution function of the binomial distribution defined as
\[
\Phi(q;k;N) = \sum_{j=0}^{k} \binom{N}{j}q^j(1-q)^{N-j}.
\]
The following inequality holds:
\[
\pi^N\left(\left\{\omega_N\in\Delta^N:\qerror_{\mathcal{A},\hat{\mathcal{A}}}(\epsilon) \leq \overline{q}^{\mathrm{bin}}_\beta\big(F(\omega_N)\big)\right\}\right) \geq 1-\beta.
\]\vskip0pt
\end{prop}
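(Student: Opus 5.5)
The plan is the classical binomial-tail (Clopper--Pearson) argument, in which the model $\hat\calA$ is held fixed and independent of $\omega_N$. Write $q^\star\coloneqq\qerror_{\calA,\hat\calA}(\epsilon)=\pi(S)$, with $S$ the event set in \eqref{eq:event-set}. Since the samples in $\omega_N$ are i.i.d.\ with law $\pi$, the indicators $\boldsymbol{1}_{(x_i,j_i,w_i)\in S}$ are i.i.d.\ Bernoulli$(q^\star)$. Hence $F(\omega_N)$ is Binomial$(N,q^\star)$ under $\pi^N$, so $\pi^N(F(\omega_N)\leq k)=\Phi(q^\star;k;N)$ for every $k$.

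First I will record the monotonicity facts about $\Phi$. For fixed $k<N$, the map $q\mapsto\Phi(q;k;N)$ is continuous and strictly decreasing on $[0,1]$, from $1$ to $0$. This follows from the standard identity $\partial_q\Phi(q;k;N)=-N\binom{N-1}{k}q^k(1-q)^{N-k-1}$, or from a coupling argument. It justifies the existence and uniqueness of $\overline{q}^{\mathrm{bin}}_\beta(k)$. The case $k=N$ is degenerate, since $\Phi\equiv1$ there; I will adopt the convention $\overline{q}^{\mathrm{bin}}_\beta(N)=1$, which makes the bound trivially true in that case. I will also use that $k\mapsto\overline{q}^{\mathrm{bin}}_\beta(k)$ is nondecreasing, because $\Phi(q;k;N)$ is nondecreasing in $k$.

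The key step is to describe the failure event $E\coloneqq\{\omega_N: q^\star>\overline{q}^{\mathrm{bin}}_\beta(F(\omega_N))\}$. By strict monotonicity of $\Phi$ in $q$, we have $q^\star>\overline{q}^{\mathrm{bin}}_\beta(k)$ if and only if $\Phi(q^\star;k;N)<\beta$. Let $k^\star$ be the largest integer $k\in\{0,\dots,N\}$ with $\Phi(q^\star;k;N)<\beta$; if there is none, then $E=\emptyset$ and we are done. Because $\Phi$ is nondecreasing in $k$, it follows that $E=\{F(\omega_N)\leq k^\star\}$. Therefore
\[
\pi^N(E)=\Phi(q^\star;k^\star;N)<\beta,
\]
which gives the claim $\pi^N(E^c)\geq1-\beta$.

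The main obstacle is not analytic but concerns the hypotheses. The argument requires $\hat\calA$ (and $\epsilon$) to be fixed, that is, independent of the test samples. This is exactly why the split-set approach is used: $\hat\calA$ is identified on a training set disjoint from $\omega_N$. I will state this explicitly, so that conditionally on the training data the Binomial law of $F$ holds. The remaining care points are the edge cases $q^\star\in\{0,1\}$ and $k=N$, which I will check directly.
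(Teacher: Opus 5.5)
Your proof is correct and takes essentially the same approach as the paper. The paper gives no proof of its own; it cites the test-set bound in Langford's tutorial (Theorem 3.3), whose proof is exactly your argument: for fixed $\hat\calA$, identify $F(\omega_N)$ with a $\mathrm{Binomial}(N,q^\star)$ variable and invert its CDF, which you carry out in more detail. Your convention $\overline{q}^{\mathrm{bin}}_\beta(N)=1$ correctly repairs the paper's ill-posed definition at $k=N$, where $\Phi\equiv1$, and it agrees with the reference's definition of the inversion as $\max\{q\in[0,1]:\Phi(q;k;N)\ge\beta\}$.
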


For a proof, see, e.g., \cite[Theorem 3.3]{langford2005tutorial}.

\subsection{Overall indirect framework}\label{ssec:overall_indirect}

The overall framework is presented in Algorithm \ref{alg:indirect}.
The soundness of the algorithm is established below:

\begin{thm}
Consider a SLS $\mathcal{A}$. Let $\beta\in(0,1)$ and $\epsilon>0$.
Let $\omega_N$ be an i.i.d.~sample set as in \eqref{eq:sample_set_equation} and $\calD$ be the observation set as in \eqref{eq:observation-set}. Let Assumptions \ref{assump-mode-sampling} and \ref{assum:noise} hold.
Consider an execution of Algorithm \ref{alg:indirect} with $\calD$, $\epsilon$ and $\beta$ as inputs and $\overline{\rho}$ as output.
With confidence $1-\beta$ on the sampling of $\omega_N$, it holds that $\rho(\mathcal{A}) \leq \overline{\rho}$.
\end{thm}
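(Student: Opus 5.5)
The argument combines Proposition~\ref{prop:binomial-tail-bound} with Theorem~\ref{thm:indirect-jsr-bound}. We have not seen Algorithm~\ref{alg:indirect}, so we assume its natural structure. It splits $\calD$ into a training part $\calD_{\mathrm{train}}$ (indices $I_{\mathrm{train}}$) and a test part $\calD_{\mathrm{test}}$ of size $N_{\mathrm{test}}$, with the split chosen independently of the data values. From $\calD_{\mathrm{train}}$ alone it identifies a model $\hat\calA$. It then solves \eqref{prob:whitebox-problem} for $\hat\calA$ to get $P$ and $\rhoquad(\hat\calA,\delta)$. Next it counts the test points with $\min_k\|\hat A_k x_i - y_i\|_2>\epsilon$. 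Since $y_i = A_{j_i}x_i+w_i$, this count is exactly $F$ from \eqref{eq:N-unsat} evaluated on the test subsample; we write it $F(\omega_{\mathrm{test}})$. It sets $\hat q=\overline{q}^{\mathrm{bin}}_\beta(F(\omega_{\mathrm{test}}))$ with $N$ replaced by $N_{\mathrm{test}}$, and outputs $\overline\rho=\overline{\rho}(\hat q;\hat\calA,\delta)$ for some chosen $W$.

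The first step is to condition on the training samples. Because the samples are i.i.d., the test triplets $\omega_{\mathrm{test}}$ are independent of $\omega_{\mathrm{train}}$. Conditionally on $\omega_{\mathrm{train}}$, the model $\hat\calA$ is fixed, so the event $S$ in \eqref{eq:event-set} is a deterministic set. The test triplets are then i.i.d.\ with law $\pi$, and Proposition~\ref{prop:binomial-tail-bound} applies with $N_{\mathrm{test}}$ samples. This gives, for every fixed $\omega_{\mathrm{train}}$,
\[
\pi^{N_{\mathrm{test}}}\bigl(\{\qerror_{\calA,\hat\calA}(\epsilon)\le \hat q\}\bigm|\omega_{\mathrm{train}}\bigr)\ge 1-\beta .
\]
Integrating over $\omega_{\mathrm{train}}$ (Fubini/tower property) gives $\pi^N(\{\qerror_{\calA,\hat\calA}(\epsilon)\le\hat q\})\ge 1-\beta$. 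Measurability of the event in $\omega_N$ we take as routine.

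On that event, every hypothesis of Theorem~\ref{thm:indirect-jsr-bound} holds: Assumptions~\ref{assump-mode-sampling} and \ref{assum:noise} are given, $P=P_0(\hat\calA,\delta)$, and $\hat q\ge\qerror_{\calA,\hat\calA}(\epsilon)$. The theorem therefore yields $\rho(\calA)\le\overline\rho$ deterministically on this event. If the algorithm picks $W$ (or $\delta$) by optimizing over a grid using only training data, the same conditioning argument covers it. If the choice depends on test data, we would only optimize over quantities that do not affect $\hat q$; since $W$ enters only through $\overline{\rho}(\hat q;\cdot)$ and not through $\hat q$, any choice of $W$ is valid simultaneously.

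The main obstacle is the independence argument: one must ensure $\hat\calA$, and hence $S$, does not depend on the test samples. Otherwise Proposition~\ref{prop:binomial-tail-bound}, which is stated for a fixed model, fails. A further small point is that $\zeta$ must be evaluated at an argument in $(0,1)$. If $\frac{\hat q}{2(1-p(W))\alpha}\kappa(P)\ge 1$ or $p(W)=1$, we take $\overline\rho=+\infty$, and the statement then holds trivially.
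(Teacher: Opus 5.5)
Your proof is correct and follows the paper's route: the paper simply calls the result a corollary of Theorem~\ref{thm:indirect-jsr-bound} and Proposition~\ref{prop:binomial-tail-bound}. You add the details the paper leaves implicit, namely conditioning on the training split so that $\hat{\calA}$ (hence the event $S$) is fixed when binomial-tail inversion is applied to the held-out samples, and noting that $W$ and $\delta$ may be chosen freely because they do not affect $\hat q$. One minor caveat: the observable count $\#\{i:\min_k\|\hat A_k x_i-y_i\|_2>\epsilon\}$ equals $F$ only if the prediction error is written with $-w$ rather than the paper's $+w$; that sign is evidently a typo in the paper, and Lemma~\ref{lem:indirect-ub} holds verbatim with either sign.
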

\begin{pf}
Corollary of Theorem \ref{thm:indirect-jsr-bound} and Proposition~\ref{prop:binomial-tail-bound}.\qed
\end{pf}
\begin{algorithm}[h]
\caption{Indirect data-driven stability certification}
\label{alg:indirect}
\begin{algorithmic}[1]

\Require Dataset $\calD\coloneqq\{(x_i,y_i)\}_{i=1}^N$ of noisy trajectories from unknown SLS $\calA$; system identification error level $\epsilon > 0$; confidence level $\beta\in(0,1)$.
\Ensure Probabilistic upper bound $\overline{\rho}$ on $\rho(\calA)$.

\State \textbf{System identification:}  
Learn a SLS model $\hat{\mathcal{A}} = \{\hat{A}_1, \dots, \hat{A}_m\}$ from $\mathcal{D}$.
\emph{Any approach can be used for that} (see also Section~\ref{sec:exp}).

\State \textbf{Bounding identification error:}  
Compute a probabilistic bound $\hat{q}$ such that
\[
\qerror_{\mathcal{A},\hat{\mathcal{A}}}(\epsilon) \leq \hat{q}
\]
holds with confidence $1-\beta$. \emph{See Proposition \ref{prop:binomial-tail-bound}}.

\State \textbf{Model-based quadratic JSR analysis:}  
Compute a matrix $P_0(\hat{\mathcal{A}},\delta)$ and the associated contraction rate $\rho_{\mathrm{quad}}(\hat{\mathcal{A}},\delta)$ for the model $\hat\calA$, as solutions of \eqref{prob:whitebox-problem}.

\State \textbf{Bound on the JSR of the system:}  
Compute $\overline{\rho} = \overline{\rho}(\hat{q}; \hat{\mathcal{A}}, \delta)$ as in Theorem~\ref{thm:indirect-jsr-bound}.
\end{algorithmic}
\end{algorithm}

\section{Direct Approach}\label{sec:direct_approach}

In this section, we recall the direct approach, developed in \cite{kenanian2019data,berger2021chanceconstrained,wang2021datadriven,banse2024learning}, with which we will compare our method in Section~\ref{sec:exp}.
We also extend and improve this framework to account for unbounded observation noise, to enable fair comparison with our approach.

The direct approach works as follows: from a sample set of (possibly noisy) trajectories, we compute a quadratic norm and its associated \emph{scenario} contraction rate by solving a quasi-convex scenario optimization program (see Eq.~\ref{prob:data-driven_problem} below).
Using tools from scenario optimization \cite{campi2008exact,calafiore2006thescenario}, this rate is then transformed into a probabilistic upper bound on the JSR of the system.

Concretely, given $\delta \geq 1$ and a sample set $\omega_N=\{(x_i,j_i,\allowbreak w_i)\}_{i=1}^N$, we solve the following scenario program:
\begin{equation}\label{prob:data-driven_problem}
\begin{array}{cl}
\min\limits_{\gamma\geq0,\,P\in\SPn} & \gamma,  \\
\text{s.t} & I \preceq P \preceq \delta I, \\[2pt]
& \|A_{j_i}x_i + w_i\|_P \leq \gamma \|x_i\|_P \quad \forall i\in[N],
\end{array}
\end{equation}
The optimal cost and solution of \eqref{prob:data-driven_problem} are denoted respectively by $\gamma_\star(\omega_N,\delta)$, and $P_\star(\omega_N,\delta)$.\footnote{For the same reason as mentioned earlier (footnote \ref{fn:exis_unique}), we can assume existence and uniqueness of the optimal solution.}They correspond to the scenario contraction rate and the quadratic norm.
\begin{rem}\label{rem:direct-dataset}
Note that while \eqref{prob:data-driven_problem} is defined with respect to the sample set $\omega_N$, it can be computed from the observation set $\calD$, defined in \eqref{eq:observation-set}.
Accordingly, one could write $\gamma_\star(\calD,\delta)$ and $P_\star(\calD,\delta)$ instead to highlight that it depends only on observable quantities.
\end{rem}
We make the following assumption on the noise distribution, which is also present in previous works on the direct approach with noisy measurements \cite{banse2024learning}:
\begin{assum}[Absolute continuity]\label{assum:zero-measure}
The noise distribution $\munoise$ is absolutely continuous with respect to the Lebesgue measure on $\mathbb{R}^n$.
\end{assum}

We prove the following theorem, which is an improvement (tighter bound) and a generalization (from bounded to unbounded noise) of \cite[Theorem 1]{banse2024learning}.
\begin{thm}\label{thm:jsr-direct-bound}
Consider a SLS 
$\mathcal{A}$.
Let $\beta\in(0,1)$ and $\delta\geq1$.
Let $d=n(n+1)/2$ and $N\in\Ne$, $N\geq d$.
For each sample set $\omega_N$ as in \eqref{eq:sample_set_equation}, let 
$\gamma_\star(\omega_N,\delta)$ and $P_\star(\omega_N,\delta)$ be the solution of \eqref{prob:data-driven_problem}.
Let $W\geq0$.
Let Assumptions~\ref{assump-mode-sampling},~\ref{assum:noise} and \ref{assum:zero-measure} hold.\footnote{We remind that Assumptions \ref{assump-mode-sampling} and \ref{assum:noise} define the quantities $\alpha$ and $p(W)$ used in the definition of $\overline{\rho}(\hat{q};\hat{\mathcal{A}},\delta)$.}
The following
holds:
\[
\pi^N\!\left(
\{\omega_N\in\Delta^N:\rho(\mathcal{A})\le\overline{\rho}(\omega_N)\}
\right)
\ge 1-\beta ,
\]
where 
\begin{itemize}
    \item the bound $\overline{\rho}(\omega_N)$ is defined by
    \[
    \overline{\rho}(\omega_N)
    \coloneqq
    \frac{\overline{\gamma}(\omega_N,\delta)}
    {\zeta\!\left(
    \frac{\epsilon(\beta,N)\kappa(P_\star(\omega_N,\delta))}
    {2\alpha\left(1-\frac12p(W)\right)}
    \right)} .
    \]
    \item $\overline{\gamma}(\omega_N,\delta)$ and $\epsilon(\beta,N)$ are defined in Lemma \ref{lem:direct-violation-set_bound} below.\\
    \item $\kappa(P)$ and $\zeta(x)$ are defined in Proposition ~\ref{prop:kenenian}.
\end{itemize}
\end{thm}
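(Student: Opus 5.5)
The plan is to reduce Theorem~\ref{thm:jsr-direct-bound} to Proposition~\ref{prop:kenenian}, exactly as was done for Theorem~\ref{thm:indirect-jsr-bound} through Lemma~\ref{lem:indirect-ub}. The reduction needs a per-mode bound on $\mustate(\calV_j)$, where $\calV_j$ is defined by \eqref{eq:violation-contraction} with $P\coloneqq P_\star(\omega_N,\delta)$ and $\gamma\coloneqq\overline\gamma(\omega_N,\delta)$. I expect Lemma~\ref{lem:direct-violation-set_bound} to provide this bound in the form
\[
\mustate(\calV_j)\le \nu(\omega_N)\coloneqq\frac{\epsilon(\beta,N)}{\alpha\left(1-\frac12p(W)\right)}\quad\text{for all }j\in[m],
\]
holding simultaneously for all $j$ on an event of $\pi^N$-probability at least $1-\beta$. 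Given that, the theorem follows by fixing $\omega_N$ in this event and applying Proposition~\ref{prop:kenenian} with $P_\star(\omega_N,\delta)$, $\overline\gamma(\omega_N,\delta)$ and $\nu(\omega_N)$. This gives $\rho(\calA)\le\overline\gamma/\zeta(\frac{\nu}{2}\kappa(P_\star))$, which is precisely $\overline\rho(\omega_N)$.

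If I have to establish the lemma myself, I would proceed in three steps.

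\emph{Step 1 (scenario bound).} Program \eqref{prob:data-driven_problem} is quasi-convex in $(\gamma,P)$, and $P$ ranges over symmetric matrices, so there are $d=n(n+1)/2$ free parameters. Assumption~\ref{assum:zero-measure} makes the constraints nondegenerate almost surely, so each constraint is active with probability zero unless it is a support constraint. The scenario theory for quasi-convex programs \cite{campi2008exact,calafiore2006thescenario} then gives the following with confidence $1-\beta$: the probability under $\pi$ that a fresh sample $(x,j,w)$ violates $\|A_jx+w\|_{P_\star}\le\gamma_\star\|x\|_{P_\star}$ is at most $\epsilon(\beta,N)$. Here $\epsilon(\beta,N)$ is the Beta/binomial-tail quantity with $d$ support constraints.

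\emph{Step 2 (removing the noise).} I would pass from violations of the noisy constraint to violations of the noise-free contraction. The key tool is a symmetry argument: the uniform law $\mustate$ is invariant under $x\mapsto -x$. For a fixed $w$, the parallelogram law in $\|\cdot\|_{P_\star}$ gives $\max\{\|A_jx+w\|_{P_\star},\|-A_jx+w\|_{P_\star}\}\ge\|A_jx\|_{P_\star}$. Hence, whenever $\|A_jx\|_{P_\star}>\gamma\|x\|_{P_\star}$ for a suitable $\gamma$, at least one of the two samples $(x,j,w)$ and $(-x,j,w)$ violates the scenario constraint. Since the two have equal probability, averaging gives a violation probability of at least $\frac12\mumode(j)\mustate(\calV_j)$. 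I expect this to be the source of the improvement by a factor of two over \cite{banse2024learning}.

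\emph{Step 3 (where $W$ enters).} The inflated rate $\overline\gamma$, presumably $\gamma_\star$ plus a term like $W\sqrt{\kappacond(P_\star)}$ by the same norm-equivalence computation as in Lemma~\ref{lem:indirect-ub}, should handle the event $\|w\|_2\le W$ deterministically. The complementary event is charged to $p(W)$ via Assumption~\ref{assum:noise}. Combining the two cases gives a lower bound of the form $\mumode(j)\mustate(\calV_j)\,(1-\frac12p(W))$ on the violation probability. Dividing by $\mumode(j)\ge\alpha$ (Assumption~\ref{assump-mode-sampling}) yields $\nu(\omega_N)$.

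The main obstacle is Step~3: the precise split that produces $1-\frac12p(W)$ rather than $1-p(W)$. I would first try a case split on $\|w\|_2\le W$. On that event I would use the deterministic inflation, which covers both $x$ and $-x$. On the complement I would use only the symmetry half-bound. I would then check that the two contributions add up to the stated constant, and adjust $\overline\gamma$ if they do not.
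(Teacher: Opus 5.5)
Your proposal is correct and follows essentially the paper's route. The paper also starts from the scenario bound $\pi(\tilde V(\omega_N))\le\epsilon(\beta,N)$ for the noisy constraint. It then uses the $x\mapsto -x$ symmetry with the split ``$\|w\|_2\le W$: both $\pm x$ violate (via $\overline\gamma=\gamma_\star+W\sqrt{\kappacond(P_\star)}$); $\|w\|_2>W$: at least one does'', which yields $\pi(V(\omega_N))\le \pi(\tilde V(\omega_N))/(1-\frac12p(W))$, and it finishes by dividing by $\alpha$ and applying Proposition~\ref{prop:kenenian}.

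The differences are cosmetic. You get the ``at least one of $\pm x$ violates'' step from the parallelogram law, where the paper splits on the sign of $(A_jx)^\top P_\star w$. Also, the paper's Lemma~\ref{lem:direct-violation-set_bound} bounds $\pi(V(\omega_N))$ and leaves the division by $\alpha$ to the theorem's proof, which is exactly your final assembly.
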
\vskip0pt

\begin{rem}
We note that Theorem~\ref{thm:jsr-direct-bound} improves the bound in \cite[Theorem 1]{banse2024learning} even when the noise is bounded, say by $W_{\mathrm{max}}$.
Indeed, if $W_{\mathrm{max}}$ is large, choosing a smaller value $W < W_{\mathrm{max}}$ can yield a tighter upper bound $\overline{\rho}(\omega_N)$ (in some cases, even $W = 0$; see Subsection \ref{subsec:bounded_noise_exp}).
\end{rem}
\begin{rem}
Similarly to Remark~\ref{rem:direct-dataset}, $\overline{\rho}(\omega_N)$ can be written $\overline{\rho}(\calD)$ as it depends only on $\calD$, defined in \eqref{eq:observation-set}.
\end{rem}

To prove Theorem~\ref{thm:jsr-direct-bound}, we need the following lemma:
\begin{lem}\label{lem:direct-violation-set_bound}
Let $\calA$, $\beta$, $\delta$, $d$, $N$, $\gamma_\star(\omega_N,\delta)$, $P_\star(\omega_N,\delta)$ and $W$ be as in Theorem \ref{thm:jsr-direct-bound}.
Let Assumptions~\ref{assump-mode-sampling},~\ref{assum:noise} and \ref{assum:zero-measure} hold.
Define $\epsilon(\beta,N)$ as the unique $q\in[0,1]$ such that $\Phi(q;d-1;N)=\beta$, where $\Phi(\cdot;\cdot;\cdot)$ is as in Proposition \ref{prop:binomial-tail-bound}.
The following inequality holds:
\[
\pi^N\!\left(
\left\{\omega_N \in \Delta^N :
\pi(V(\omega_N)) \leq \xi(\beta,N)
\right\}
\right)
\geq 1-\beta,
\]
where $\xi(\beta,N) \coloneqq \frac{\epsilon(\beta,N)}{1-\frac12p(W)}$ and
\begin{align*}
&V(\omega_N) \coloneqq \{(x,j,w) \in \Delta : \\
&\hspace{2cm}\|A_j x\|_{P_{\star}(\omega_N,\delta)} > \overline{\gamma}(\omega_N,\delta) \|x\|_{P_{\star}(\omega_N,\delta)} \},
\end{align*}
with
\[
\overline{\gamma}(\omega_N,\delta) \coloneqq \gamma_{\star}(\omega_N,\delta) + W\sqrt{\kappacond(P_{\star}(\omega_N,\delta))},
\]
and $\kappacond(P)$ is defined in Theorem~\ref{thm:indirect-jsr-bound}.
\end{lem}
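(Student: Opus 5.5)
The plan is to bound $\pi(V(\omega_N))$ in two stages. First, a standard scenario-optimization bound controls the probability of the \emph{observed-data} violation set of \eqref{prob:data-driven_problem}. Second, a deterministic comparison, based on the symmetry $x\mapsto -x$ of $\mustate$, relates that set to $V(\omega_N)$ and produces the factor $1-\frac12p(W)$.

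\textbf{Step 1 (scenario bound).} Write $P_\star=P_\star(\omega_N,\delta)$ and $\gamma_\star=\gamma_\star(\omega_N,\delta)$. Define the noisy violation set
\[
\Vtilde(\omega_N)\coloneqq\{(x,j,w)\in\Delta:\|A_jx+w\|_{P_\star}>\gamma_\star\|x\|_{P_\star}\}.
\]
This is exactly the set of new samples whose constraint would be violated by the scenario solution of \eqref{prob:data-driven_problem}. The program is quasi-convex and its decision variable is $P$, which has $d$ free entries, with $\gamma$ obtained by bisection. I would invoke the scenario result for quasi-convex programs used in \cite{kenanian2019data,berger2021chanceconstrained,banse2024learning}, which bounds the number of support constraints by $d$, together with the exact Campi--Garatti bound \cite{campi2008exact}. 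This gives
\[
\pi^N\big(\{\omega_N:\pi(\Vtilde(\omega_N))\le\epsilon(\beta,N)\}\big)\ge1-\beta,
\]
with $\epsilon(\beta,N)$ defined through $\Phi(\cdot;d-1;N)=\beta$. Assumption~\ref{assum:zero-measure} is used here: it ensures that the data points are almost surely distinct and in generic position, so uniqueness and the support-constraint count hold almost surely. This step is the main obstacle. It means checking that the precise scenario theorem applies with $d-1$ in the binomial tail; if the count of $d$ support constraints is delicate, I would fall back on the statement as proved in \cite{berger2021chanceconstrained}.

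\textbf{Step 2 (deterministic comparison, for a fixed $\omega_N$).} Fix $j$ and let $V_j\coloneqq\{x\in\Sph^{n-1}:\|A_jx\|_{P_\star}>\overline\gamma\|x\|_{P_\star}\}$. Two facts are needed.

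\emph{(a) Both signs violate when the noise is small.} Take $x\in V_j$ and $\|w\|_2\le W$. Since $\|x\|_2=1$, we have $\|w\|_{P_\star}\le\sqrt{\lambdamax(P_\star)}\,W\le W\sqrt{\kappacond(P_\star)}\,\|x\|_{P_\star}$. Hence
\[
\|A_jx\pm w\|_{P_\star}\ge\|A_jx\|_{P_\star}-\|w\|_{P_\star}>\gamma_\star\|x\|_{P_\star},
\]
so both $(x,j,w)$ and $(x,j,-w)$ lie in $\Vtilde$.

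\emph{(b) At least one sign violates for any noise.} For arbitrary $w$, the triangle inequality gives $\|A_jx+w\|_{P_\star}+\|A_jx-w\|_{P_\star}\ge2\|A_jx\|_{P_\star}>2\gamma_\star\|x\|_{P_\star}$. So at least one of $(x,j,w)$, $(x,j,-w)$ lies in $\Vtilde$.

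\emph{Moving the symmetry onto the state.} Since $\munoise$ need not be symmetric, I would not flip $w$. Instead I use $\|A_j(-x)+w\|_{P_\star}=\|A_jx-w\|_{P_\star}$, the invariance of $V_j$ and of the uniform $\mustate$ under $x\mapsto-x$, and $\|{-x}\|_{P_\star}=\|x\|_{P_\star}$. Together these show that $(-x,j,w)\in\Vtilde$ if and only if $(x,j,-w)\in\Vtilde$. Integrating $\boldsymbol 1_{(x,j,w)\in\Vtilde}+\boldsymbol 1_{(-x,j,w)\in\Vtilde}$ over $x\in V_j$ and $w$, and using (a) and (b), gives
\[
2\,(\mustate\times\munoise)\big(\Vtilde\cap(V_j\times\{j\}\times\mathcal W)\big)\ge\mustate(V_j)\big(2(1-p(W))+p(W)\big).
\]
Therefore the $j$-section of $\Vtilde$ has measure at least $\mustate(V_j)\big(1-\frac12p(W)\big)$.

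\textbf{Step 3 (conclusion).} Multiply by $\mumode(j)$ and sum over $j$. This gives $\pi(\Vtilde(\omega_N))\ge\big(1-\frac12p(W)\big)\,\pi(V(\omega_N))$ for every $\omega_N$; Assumption~\ref{assump-mode-sampling} is not needed at this stage. Combining with Step 1, on an event of $\pi^N$-probability at least $1-\beta$,
\[
\pi(V(\omega_N))\le\frac{\epsilon(\beta,N)}{1-\frac12p(W)}=\xi(\beta,N).
\]
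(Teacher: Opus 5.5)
Your proposal is correct and matches the paper's proof. The scenario bound on $\pi(\Vtilde(\omega_N))$ is simply imported in both (the paper cites \cite[Eqs.~18--19]{banse2024learning}), and the factor $1-\frac12p(W)$ comes from the same $x\mapsto-x$ symmetry of $\mustate$ combined with the small-noise triangle-inequality argument. The one local difference is your step (b): the paper shows that $(A_jx)^\top P_\star w\ge0$ forces a violation by expanding the quadratic form, splits on the sign of this inner product, and invokes Assumption~\ref{assum:zero-measure} for the tie case. Your inequality $\|A_jx+w\|_{P_\star}+\|A_jx-w\|_{P_\star}\ge2\|A_jx\|_{P_\star}$ works for any norm and needs no tie-breaking.
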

\begin{pf}
In this proof, with a slight abuse of notation, we denote $\gamma=\gamma_\star(\omega_N,\delta)$, $P=P_\star(\omega_N,\delta)$ and $\overline\gamma=\overline\gamma(\omega_N,\delta)$.
Also, for each sample set $\omega_N$, we define
\[
\tilde{V}(\omega_N) \coloneqq \{ (x,j,w) : \|A_jx+w\|_P > \gamma\|x\|_P \}.
\]
Following \cite[Eqs. 18--19]{banse2024learning}, it holds that
\[
\pi^N\left(\left\{\omega_N\in\Delta^N : \pi(\tilde{V}(\omega_N)) \leq \epsilon(\beta,N)\right\}\right) \geq 1-\beta .
\]
It remains to relate $\pi(\tilde V(\omega_N))$ and $\pi(V(\omega_N))$ for each $\omega_N$.
For that, fix $\omega_N\in\Delta^N$ and let $(x,j,\tilde{w})\in V(\omega_N)$.
We will prove that for all
$w\in W$,
\begin{equation}\label{eq:direct-dual}
\!\!(\:\underbrace{\|w\|_2 \leq W}_{(a)} \:\vee\: \underbrace{(A_jx)^\top Pw \geq 0}_{(b)}\:) \Rightarrow (x,j,w)\in\tilde{V}(\omega_N).
\end{equation}
Indeed, let $w\in\mathcal{W}$.
In case (a), by the triangle inequality, we have
\begin{align*}
    \overline{\gamma} \|x\|_P < \|A_jx\|_P &\leq \|A_jx+w\|_P + \|w\|_P \\
    &\leq\|A_jx+w\|_P + W\sqrt{\lambda_{\text{max}}(P)}
\end{align*}
and
\begin{align*}
\overline{\gamma} \|x\|_P &= \gamma \|x\|_P + W \sqrt{\kappacond(P)}\|x\|_P \\
    &\geq \gamma \|x\|_P + W \sqrt{\lambda_{\text{max}}(P)}.
\end{align*}
Therefore, we have
\begin{equation*}
    \gamma \|x\|_P \leq \overline{\gamma} \|x\|_P - W \sqrt{\lambda_{\text{max}}(P)} < \|A_jx+w\|_P,
\end{equation*}
which implies $(x,j,w)\in\tilde V(\omega_N)$.
Now, in case (b), we have
\begin{align*}
    &(A_jx+w)^\top P(A_jx+w) \\
    &\qquad= (A_jx)^\top P (A_jx) + 2\underbrace{(A_jx)^\top P w}_{\geq 0} + \underbrace{w^TPw}_{\geq 0} \\
    &\qquad\geq (A_jx)^\top P (A_jx).
\end{align*}
Hence,
\[
\|A_jx+w\|_P\ge\|A_jx\|_P>\overline{\gamma}\|x\|_P\geq\gamma\|x\|_P,
\]
which implies $(x,j,w)\in\tilde V(\omega_N)$.
This proves \eqref{eq:direct-dual}.

Then, by \eqref{eq:direct-dual}, Assumptions \ref{assum:noise} and \ref{assum:zero-measure}, and the definitions of $\tilde{V}(\omega_N)$ and $V(\omega_N)$, it follows that (letting $\lVert\cdot\rVert=\lVert\cdot\rVert_P$)
\begin{align*}
\pi(\tilde{V}(\omega_N)) &=\mathbb{E}_{x,j,w}\big[ \boldsymbol{1}_{\|A_jx+w\|>\gamma\|x\|}\big] \\
&=\frac12\mathbb{E}_{x,j,w}\big[ \boldsymbol{1}_{\|A_jx+w\|>\gamma\|x\|}+\boldsymbol{1}_{\|A_jx-w\|>\gamma\|x\|}\big] \\
\intertext{(since $\pi$ is symmetric with respect to $x$)}
&\geq\frac12\mathbb{E}_{x,j,w}\big[ \boldsymbol{1}_{\|A_jx\|>\overline\gamma\|x\|}\boldsymbol{1}_{(A_jx)^\top Pw\ge0} \\
&\hspace{0.5cm} {}+\boldsymbol{1}_{\|A_jx\|>\overline\gamma\|x\|}\boldsymbol{1}_{(A_jx)^\top Pw<0}\boldsymbol{1}_{\lVert w\rVert_2\leq W} \\
&\hspace{0.5cm} {}+\boldsymbol{1}_{\|A_jx\|>\overline\gamma\|x\|}\boldsymbol{1}_{(A_jx)^\top Pw\le0} \\
&\hspace{0.5cm} {}+\boldsymbol{1}_{\|A_jx\|>\overline\gamma\|x\|}\boldsymbol{1}_{(A_jx)^\top Pw>0}\boldsymbol{1}_{\lVert w\rVert_2\leq W}\big] \\
\intertext{(by the previous results)}
%&=\frac12\mathbb{E}_{x,j,w}\big[ \boldsymbol{1}_{\|A_jx\|>\overline\gamma\|x\|} \\
%&\hspace{0.5cm}{}\cdot\big(\big\{\boldsymbol{1}_{(A_jx)^\top Pw\ge0}+\boldsymbol{1}_{(A_jx)^\top Pw\le0} \big\} \\
%&\hspace{0.5cm}{}+\big\{\boldsymbol{1}_{(A_jx)^\top Pw<0}+\boldsymbol{1}_{(A_jx)^\top Pw>0} \big\} \boldsymbol{1}_{\lVert w\rVert_2\leq W}\big)\big]\\
&=\frac12\mathbb{E}_{x,j}\big[ \boldsymbol{1}_{\|A_jx\|>\overline\gamma\|x\|} \\
&\hspace{0.5cm}{}\cdot\mathbb{E}_w\big(\big\{\boldsymbol{1}_{(A_jx)^\top Pw\ge0}+\boldsymbol{1}_{(A_jx)^\top Pw\le0} \big\} \\
&\hspace{0.5cm}{}+\big\{\boldsymbol{1}_{(A_jx)^\top Pw<0}+\boldsymbol{1}_{(A_jx)^\top Pw>0} \big\} \boldsymbol{1}_{\lVert w\rVert_2\leq W}\big)\big]\\
\intertext{(by the independence of the distributions)}
&\geq\frac12\mathbb{E}_{x,j}\big[ \boldsymbol{1}_{\|A_jx\|>\overline\gamma\|x\|}(1+1-p(W))\big] \\
\intertext{(using Assumptions \ref{assum:noise} and \ref{assum:zero-measure})}
&= \Big(1-\frac{1}{2}p(W)\Big) \pi(V(\omega_N)).
\end{align*}
Hence,
\[
\pi(V(\omega_N)) \leq \frac{\pi(\tilde{V}(\omega_N))}{1-\frac12p(W)},
\]
which concludes the proof.\qed
\end{pf}
{\renewcommand{\Elproofname}{Proof of Theorem \ref{thm:jsr-direct-bound}.}
\begin{pf}
In this proof, with a slight abuse of notation, we denote $P=P_\star(\omega_N,\delta)$ and $\overline\gamma=\overline\gamma(\omega_N,\delta)$.
For each sample set $\omega_N$ and $j\in[m]$, we let
\[
\calV_j(\omega_N) \coloneqq \left\{x\in\mathbb S^{n-1} : \|A_jx\|_P>\overline\gamma\|x\|_P \right\}.
\]
Let $V(\omega_N)$ be as in Lemma \ref{lem:direct-violation-set_bound}.
By Assumption~\ref{assump-mode-sampling} and with the same reasoning as in the proof of Theorem \ref{thm:indirect-jsr-bound}, it holds that for all $j\in[m]$,
\[
\mustate(\calV_j(\omega_N))\le\frac{1}{\alpha}\pi(V(\omega_N)).
\]
Lemma \ref{lem:direct-violation-set_bound} implies that
\[
\pi^N\!\left(
\left\{\omega_N \in \Delta^N :
\mustate(\calV_j(\omega_N)) \leq \frac1\alpha\xi(\beta,N)
\right\}
\right)
\geq 1-\beta.
\]
Hence, the proof is completed by applying Proposition \ref{prop:kenenian} with $\gamma=\overline\gamma$ and $\nu=\frac1\alpha\xi(\beta,N)$.\qed
\end{pf}}

\subsection{Overall direct framework}
The overall framework is given in Algorithm~\ref{alg:direct}.
The soundness of the algorithm is established below:
\begin{thm}
Consider a SLS $\mathcal{A}$. Let $\beta\in(0,1)$.
Let $\omega_N$ be an i.i.d.~sample set as in \eqref{eq:sample_set_equation} and $\calD$ be the observation set as in \eqref{eq:observation-set}. Let Assumptions \ref{assump-mode-sampling} and \ref{assum:noise} hold.
Consider an execution of Algorithm \ref{alg:direct} with $\calD$ and $\beta$ as inputs and $\overline{\rho}$ as output.
With confidence $1-\beta$ on the sampling of $\omega_N$, it holds that $\rho(\mathcal{A}) \leq \overline{\rho}$.
\end{thm}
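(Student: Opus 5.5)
This soundness statement follows directly from Theorem~\ref{thm:jsr-direct-bound}, mirroring the argument for the indirect algorithm. Algorithm~\ref{alg:direct} (as described in the preceding text) proceeds in two steps. First, it solves the scenario program \eqref{prob:data-driven_problem} on the observation set $\calD$ to obtain $\gamma_\star(\calD,\delta)$ and $P_\star(\calD,\delta)$. Second, it returns
\[
\overline{\rho}=\frac{\overline{\gamma}(\calD,\delta)}{\zeta\!\left(\frac{\epsilon(\beta,N)\kappa(P_\star(\calD,\delta))}{2\alpha(1-\frac12p(W))}\right)}
\]
for a chosen $W\geq0$. By Remark~\ref{rem:direct-dataset}, every quantity entering this formula depends on $\omega_N$ only through the observable pairs $(x_i,y_i)$. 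Hence the output of the algorithm equals $\overline{\rho}(\omega_N)$ as defined in Theorem~\ref{thm:jsr-direct-bound}, pointwise for every $\omega_N\in\Delta^N$.

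The plan is therefore as follows. First, I will verify that the event $\{\omega_N:\rho(\calA)\le\overline{\rho}\}$, where $\overline{\rho}$ is the algorithm output, coincides with the event appearing in Theorem~\ref{thm:jsr-direct-bound}. Second, I will check that the hypotheses of that theorem are met: $\beta\in(0,1)$, $N\ge d$ (an implicit requirement of the algorithm, needed for $\epsilon(\beta,N)$ to be well defined), a fixed $W\ge0$, and Assumptions~\ref{assump-mode-sampling}, \ref{assum:noise} and \ref{assum:zero-measure}. Applying Theorem~\ref{thm:jsr-direct-bound} then gives $\pi^N(\{\omega_N:\rho(\calA)\le\overline{\rho}(\omega_N)\})\ge1-\beta$, which is exactly the claim ``with confidence $1-\beta$.''

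The main point of care is the hypotheses. The soundness statement lists only Assumptions~\ref{assump-mode-sampling} and \ref{assum:noise}, but Lemma~\ref{lem:direct-violation-set_bound} uses Assumption~\ref{assum:zero-measure}. It enters in two places: in the scenario bound borrowed from \cite{banse2024learning}, which requires nondegeneracy of the constraints, and in the symmetry step, where the set $\{(A_jx)^\top Pw=0\}$ must be null. I would state explicitly that Assumption~\ref{assum:zero-measure} is in force, as in the setting of this section.

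A second subtlety concerns $W$. It must be fixed independently of the data, or at least chosen from a finite grid with a union bound, because the confidence level in Theorem~\ref{thm:jsr-direct-bound} holds for each fixed $W$. I would treat $W$ as a fixed algorithm parameter, so no further probabilistic argument is needed.
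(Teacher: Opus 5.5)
Your proposal is correct and takes the paper's route: the paper proves this statement in one line as a corollary of Theorem~\ref{thm:jsr-direct-bound}, which is what you get by identifying the algorithm's output with $\overline{\rho}(\omega_N)$ pointwise. You are also right that Assumption~\ref{assum:zero-measure}, $N\geq d$, and a data-independent choice of $W$ must be in force, since the soundness statement lists only Assumptions~\ref{assump-mode-sampling} and~\ref{assum:noise} while Theorem~\ref{thm:jsr-direct-bound} requires them.
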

\begin{pf}
Corollary of Theorem \ref{thm:jsr-direct-bound}.\qed
\end{pf}
\begin{algorithm}[t]
\caption{Direct data-driven stability certification}
\label{alg:direct}
\begin{algorithmic}[1]
\Require Dataset $\calD\coloneqq\{(x_i,y_i)\}_{i=1}^N$ of noisy trajectories from unknown SLS $\calA$; confidence level $\beta\in(0,1)$.

\Ensure Probabilistic upper bound $\overline{\rho}$ on $\rho(\calA)$.

\State \textbf{Quadratic Synthesis:}  
Compute a matrix $P_\star(\calD,\delta)$ and associated scenario contraction rate $\gamma_\star(\calD,\delta)$ as solutions of \eqref{prob:data-driven_problem}.

\State \textbf{Bound on the JSR of the system:}  
Compute $\overline{\rho} = \overline{\rho}(\mathcal{D})$ as in Theorem~\ref{thm:jsr-direct-bound}.
\end{algorithmic}
\end{algorithm}

\section{Experimental Results}\label{sec:exp}

%\subsection{Consensus network}\label{sec:consensus}

We consider a consensus problem over a hidden switched interaction network, illustrated in Figure~\ref{fig:network}. Determining whether consensus is achieved can be formulated as a stability analysis problem for a $5$D SLS \cite{jadbabaie2003coordination}. Since the underlying networks are assumed unknown, stability is assessed by using data-driven methods on trajectory data.
We apply both the direct and indirect approaches to this problem and compare their performance under different noise settings.
\begin{figure}
    \centering
    \begin{tikzpicture}[scale=0.6]
        % Graph 1
        \node[draw, circle, fill=blue, inner sep=2pt] (1) at (0,2) {};
        \node[draw, circle, fill=blue, inner sep=2pt] (2) at (0,0) {};
        \node[draw, circle, fill=blue, inner sep=2pt] (3) at (0.5,1) {};
        \node[draw, circle, fill=blue, inner sep=2pt] (4) at (3,0) {};
        \node[draw, circle, fill=blue, inner sep=2pt] (5) at (3,2) {};
        \node[draw, circle, fill=blue, inner sep=2pt] (6) at (2.5,1) {};
        \draw[->,>=stealth] (4) -- (1);
        \draw[->,>=stealth] (5) -- (2);
        \draw[->,>=stealth] (1) -- (3);
        \draw[->,>=stealth] (5) -- (3);
        \draw[->,>=stealth] (5) -- (4);
        \draw[->,>=stealth] (6) -- (4);
        \draw[->,>=stealth] (6) -- (5);
        \draw[->,>=stealth] (2) -- (6);
        \draw[->,>=stealth] (3) -- (6);
    \end{tikzpicture}
    \hfill
    \begin{tikzpicture}[scale=0.6]
        % Graph 2
        \node[draw, circle, fill=blue, inner sep=2pt] (1) at (0,2) {};
        \node[draw, circle, fill=blue, inner sep=2pt] (2) at (0,0) {};
        \node[draw, circle, fill=blue, inner sep=2pt] (3) at (0.5,1) {};
        \node[draw, circle, fill=blue, inner sep=2pt] (4) at (3,0) {};
        \node[draw, circle, fill=blue, inner sep=2pt] (5) at (3,2) {};
        \node[draw, circle, fill=blue, inner sep=2pt] (6) at (2.5,1) {};
        \draw[->,>=stealth] (3) -- (1);
        \draw[->,>=stealth] (4) -- (1);
        \draw[->,>=stealth] (6) -- (1);
        \draw[->,>=stealth] (3) -- (2);
        \draw[->,>=stealth] (5) -- (2);
        \draw[->,>=stealth] (6) -- (2);
        \draw[->,>=stealth] (2) -- (3);
        \draw[->,>=stealth] (2) -- (4);
        \draw[->,>=stealth] (6) -- (4);
        \draw[->,>=stealth] (4) -- (5);
        \draw[->,>=stealth] (6) -- (5);
        \draw[->,>=stealth] (1) -- (6);
    \end{tikzpicture}
    \hfill
    \begin{tikzpicture}[scale=0.6]
        % Graph 3
        \node[draw, circle, fill=blue, inner sep=2pt] (1) at (0,2) {};
        \node[draw, circle, fill=blue, inner sep=2pt] (2) at (0,0) {};
        \node[draw, circle, fill=blue, inner sep=2pt] (3) at (0.5,1) {};
        \node[draw, circle, fill=blue, inner sep=2pt] (4) at (3,0) {};
        \node[draw, circle, fill=blue, inner sep=2pt] (5) at (3,2) {};
        \node[draw, circle, fill=blue, inner sep=2pt] (6) at (2.5,1) {};
        \draw[->,>=stealth] (3) -- (1);
        \draw[->,>=stealth] (4) -- (1);
        \draw[->,>=stealth] (5) -- (1);
        \draw[->,>=stealth] (6) -- (1);
        \draw[->,>=stealth] (6) -- (2);
        \draw[->,>=stealth] (6) -- (3);
        \draw[->,>=stealth] (6) -- (4);
        \draw[->,>=stealth] (1) -- (5);
        \draw[->,>=stealth] (2) -- (5);
        \draw[->,>=stealth] (4) -- (5);
        \draw[->,>=stealth] (6) -- (5);
        \draw[->,>=stealth] (1) -- (6);
        \draw[->,>=stealth] (5) -- (6);
    \end{tikzpicture}
    \caption{Interaction networks used in the consensus experiment.
    The system switches between three unknown network topologies, giving an underlying switched linear dynamics.}
    \label{fig:network}
\end{figure}
The mode selection is considered uniform, i.e., each of the three modes can be selected with probability $\frac{1}{3} = \alpha$.
We consider three noise settings:
\begin{itemize}
    \item \textbf{Gaussian noise:} additive white Gaussian noise with zero mean and covariance $\sigma^2 I$, where $\sigma = 10^{-3}$.
    \item \textbf{Gaussian mixture noise:} 
    \begin{align*}
    w_t \sim (1-\epsilon)\mathcal{N}(0,\sigma_1^2 I) + \epsilon \mathcal{N}(0,\sigma_2^2 I),
    \end{align*}
    where $\sigma_1 = 10^{-3}$, $\sigma_2 = 1$, and $\epsilon = \frac{1}{250}$. This model captures rare but large outliers.
    
    \item \textbf{Bounded noise:} noise uniformly distributed in the Euclidean ball $B(0,0.1)$. This setting enables comparison with guarantees derived under bounded noise assumptions, such as in \cite{banse2024learning}.
\end{itemize}
The quality of the indirect approach depends on the system identification procedure. Identifying SLS is known to be NP-hard in general, but heuristics often yield satisfactory practical results; see \cite{lauer2019hybrid} for an overview.
In this work, we employ the $k$-LinReg algorithm \cite[Algorithm~6]{lauer2019hybrid}, a variant of $k$-means for switched linear regression that alternates between assigning data points to modes and least-squares regression to estimate the matrices. To improve robustness to outliers, we adopt a trimmed variant inspired by trimmed $k$-means \cite{garcia1999robustness}, discarding a fraction of samples with the largest residuals at each iteration. The resulting procedure is summarized in Algorithm \ref{algo:trimmed-k-linReg} in Appendix \ref{appendix:algo}.
This highlights a key advantage of the indirect approach: the framework is agnostic to the identification method and can incorporate robust or domain-specific algorithms.

\subsection{Gaussian noise}\label{ssec:gauss_noise}
We set $\delta = 5$. For the probabilistic bounds, we use $\beta = 5\%$, $W = 3\sigma = 4 \cdot 10^{-3}$ (yielding $p(W) \approx 0.006$), and $\epsilon = 5\cdot 10^{-3}$.\footnote{We chose the parameters $W$ and $\epsilon$ to minimize the upper bound; see Figures \ref{fig:consensus_ub_gaussian_indirect_epsilon} and \ref{fig:consensus_ub_gaussian_direct}} in Appendix \ref{apendix:add_fig}. For the indirect approach based on binomial-tail inversion bounds, the dataset is split equally into identification and certification sets.

%Figure~\ref{fig:consensus_gamma_gaussian} shows the estimated quadratic joint spectral radius (JSR) as a function of the number of samples $N$. The indirect method achieves accurate estimates with relatively few samples, whereas the direct approach requires significantly larger datasets and does not attain comparable accuracy.

Figure~\ref{fig:consensus_ub_gaussian_direct_vs_indirect} reports the bounds obtained by both approaches as a function of the total number of samples.
Stability is certified with 95\% confidence when the upper bound drops below $1$. The indirect method certifies stability with considerably fewer samples (around $800$) compared to the direct approach (around $2200$).

Figure~\ref{fig:consensus_time} compares the computational cost of the direct and indirect approaches with the number of samples. The direct method requires solving an LMI, whereas the indirect method primarily incurs the cost of the $k$-LinReg identification procedure. Since solving LMIs is significantly more expensive than alternating least squares, the indirect approach is substantially faster.

Overall, the indirect approach performs better than the direct one in both sample and computational complexity, consistent qualitatively with \cite[Subsection II.C]{krishnan2021direct}, which reports similar advantages for indirect methods in data-driven predictive control of LTI systems.
\begin{figure}
    \centering
    \includegraphics[width=8.6cm]{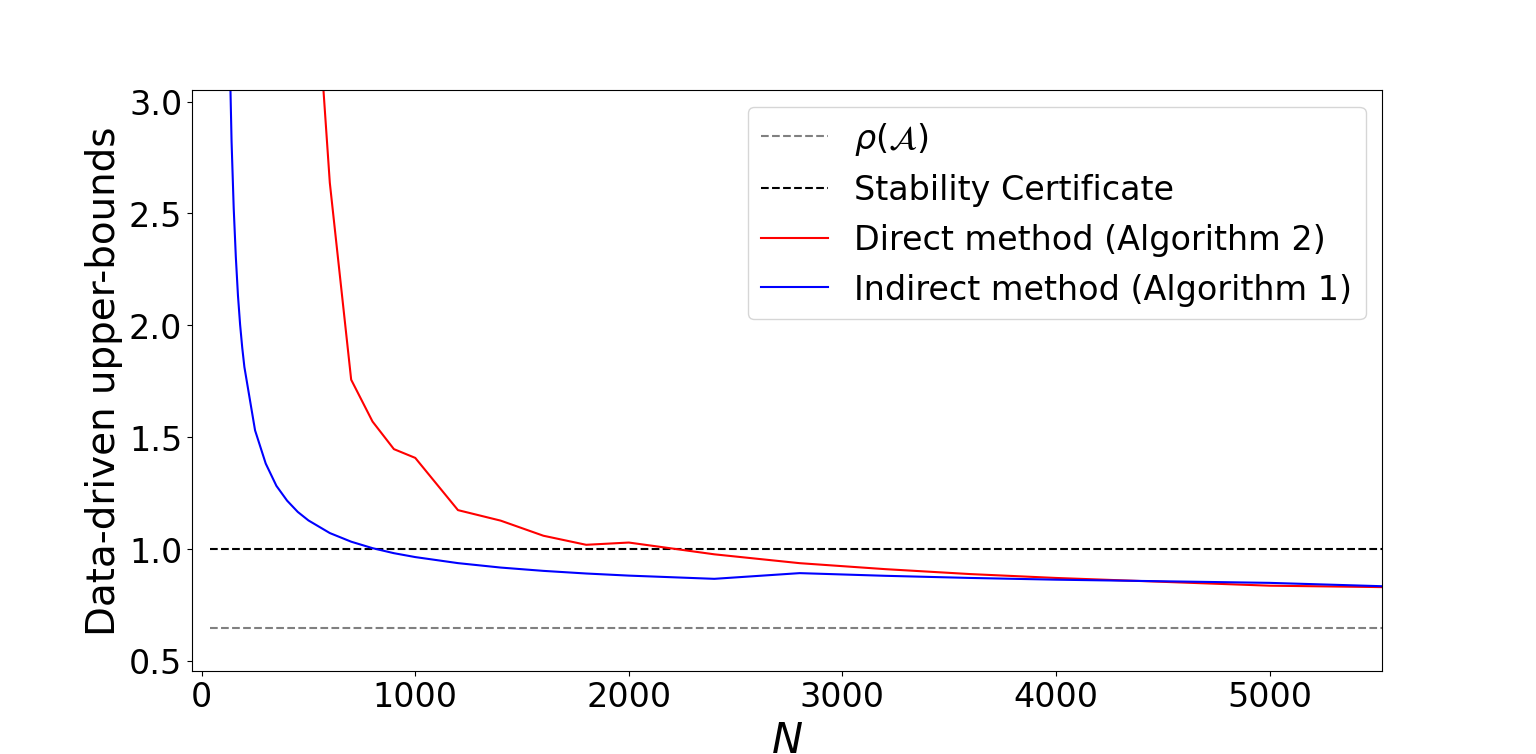}
    \caption{\textbf{Comparison of the direct and indirect probabilistic upper bounds on the consensus problem with Gaussian noise.}
    Stability is certified (with confidence 95\%) when the bound drops below 1.
    The indirect approach certifies stability with fewer samples than the direct method.}
    \label{fig:consensus_ub_gaussian_direct_vs_indirect}
\end{figure}
\begin{figure}
    \centering
    \includegraphics[width=8.6cm]{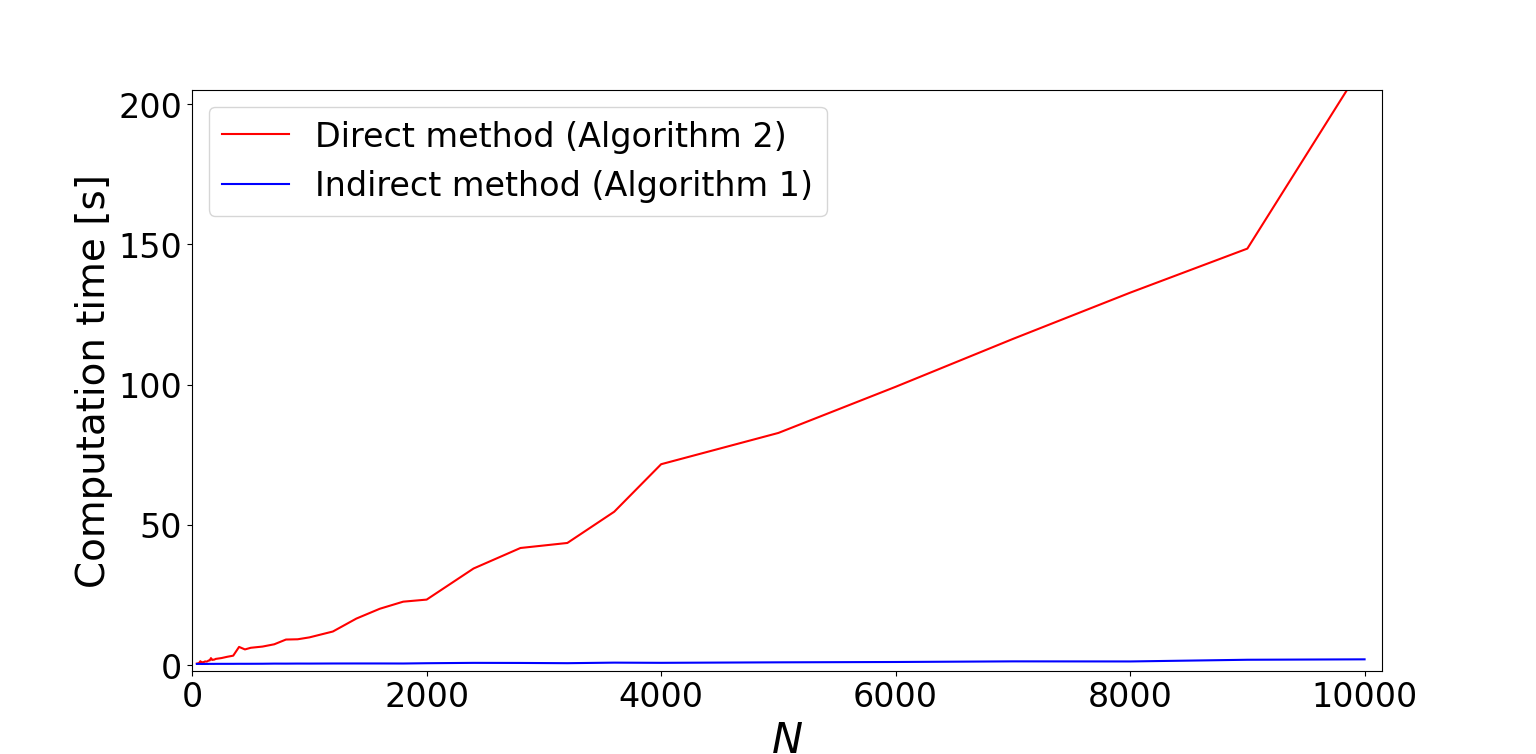}
    \caption{\textbf{Computation time of the direct method (Algorithm \ref{alg:direct}) and the indirect method (Algorithm \ref{alg:indirect}).}
The direct method becomes increasingly more expensive due to the growth in the number of LMI constraints, whereas the indirect approach remains computationally efficient.
    \label{fig:consensus_time}}
\end{figure}

\subsection{Gaussian mixture noise}\label{ssec:gauss_mixt_noise}
We now consider the Gaussian mixture noise setting with outliers. We use $\delta = 5$, $\beta = 5\%$, $\epsilon = 5 \cdot 10^{-3}$ and $W = 4 \cdot 10^{-3}$, leading to $p(W) \approx 0.006 + \frac{1}{250}$.

Figure~\ref{fig:consensus_ub_gaussian_direct_vs_indirect_mixture} reports the bounds.
The indirect method provides meaningful stability guarantees, while the direct method fails to produce useful bounds in this setting.
These results highlight the robustness advantage of the indirect paradigm when outliers are present. Since the direct approach enforces a decrease of the Lyapunov function along all sampled trajectories, it is inherently sensitive to even a small fraction of corrupted data. By contrast, in the indirect approach, the identification step implemented by Algorithm \ref{algo:trimmed-k-linReg} acts as a kind of filter for the data that removes the outliers \cite{lei2012automatic}, thereby mitigating these effects when sufficient data are available.
This limitation reflects the sensitivity of the current scenario-based formulation to outliers. Incorporating robust scenario optimization techniques, such as those proposed in \cite{romao2021tight}, constitutes a promising future direction.
\begin{figure}
    \centering
    \includegraphics[width=8.6cm]{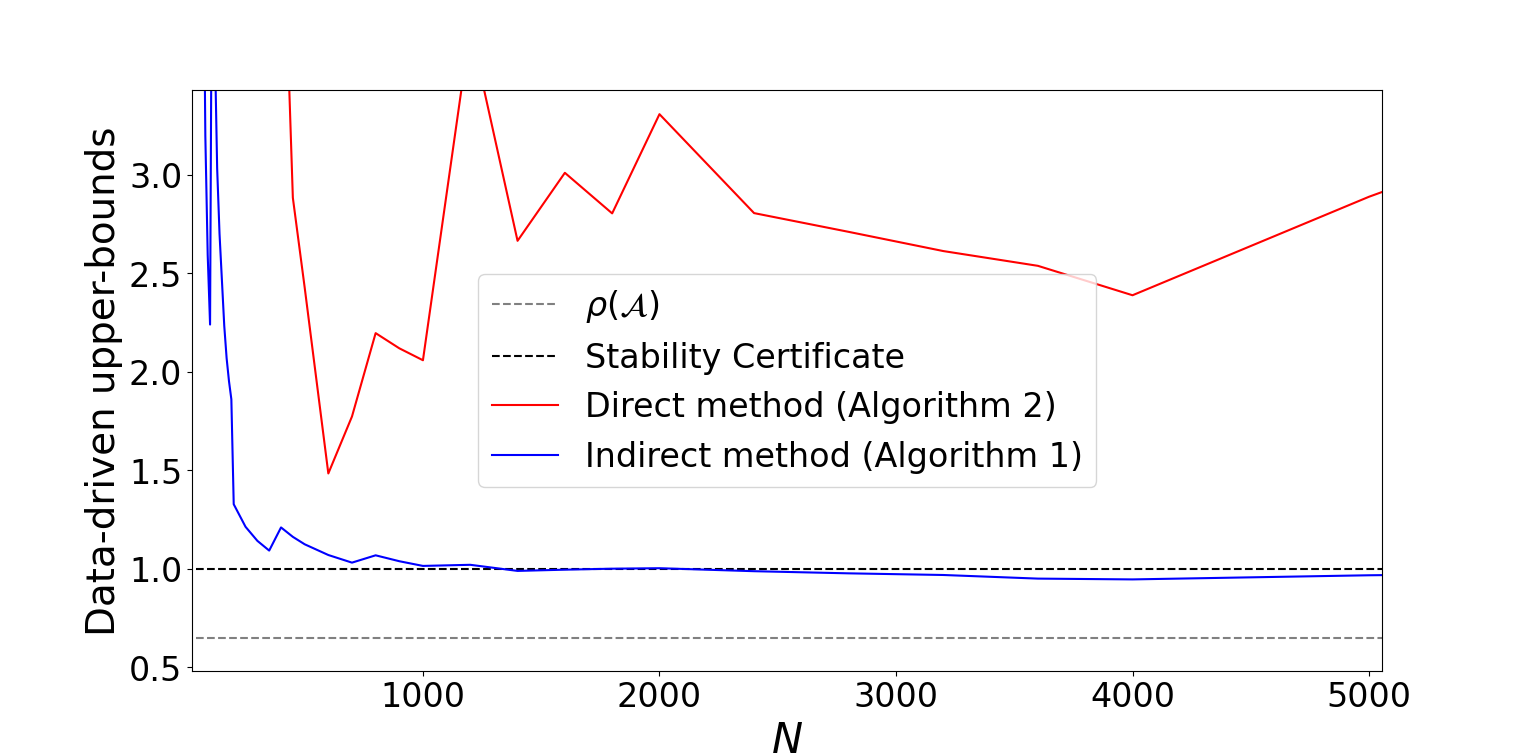}
    \caption{\textbf{Comparison under Gaussian mixture noise.}
    The indirect approach provides meaningful stability guarantees (below $1$), whereas the direct approach fails to yield informative bounds due to its sensitivity to outliers.
    \label{fig:consensus_ub_gaussian_direct_vs_indirect_mixture}}
\end{figure}

\subsection{Bounded noise}\label{subsec:bounded_noise_exp}

We consider the bounded noise setting to compare the direct approach of \cite{banse2024learning}, which assumes bounded noise, with the bound obtained in Theorem \ref{thm:jsr-direct-bound}.
The bound from \cite{banse2024learning} corresponds to the special case of Theorem \ref{thm:jsr-direct-bound} with $W = 0.1$ and $p(W)=0$. We compare with the case $W=0$ and $p(W) = 1$.
We use $\delta = 5$ and $\beta=5\%$.

Figure~\ref{fig:consensus_ub_bounded_direct} reports the results.
Our bound quickly falls below that of \cite{banse2024learning}.
In particular, the bound from \cite{banse2024learning} fails to certify stability, whereas our bound successfully drops below one around $7500$ samples. This gap is due to the worst-case noise analysis in \cite[Proposition 3, Theorem 1]{banse2024learning}, which introduces the additive term $W\sqrt{\kappa_{\mathrm{cond}}(P)}$. This term does not vanish with the number of samples, leading to inherent asymptotic conservatism in the bounded-noise setting.
In contrast, Theorem \ref{thm:jsr-direct-bound} is more flexible and can avoid this situation by setting the additional term to $0$.
More generally, one can optimize over $W$ in Theorem~\ref{thm:jsr-direct-bound} to obtain the tightest bound, as is illustrated in Appendix~\ref{apendix:add_fig}.
\begin{figure}
    \centering
    \includegraphics[width=8.6cm]{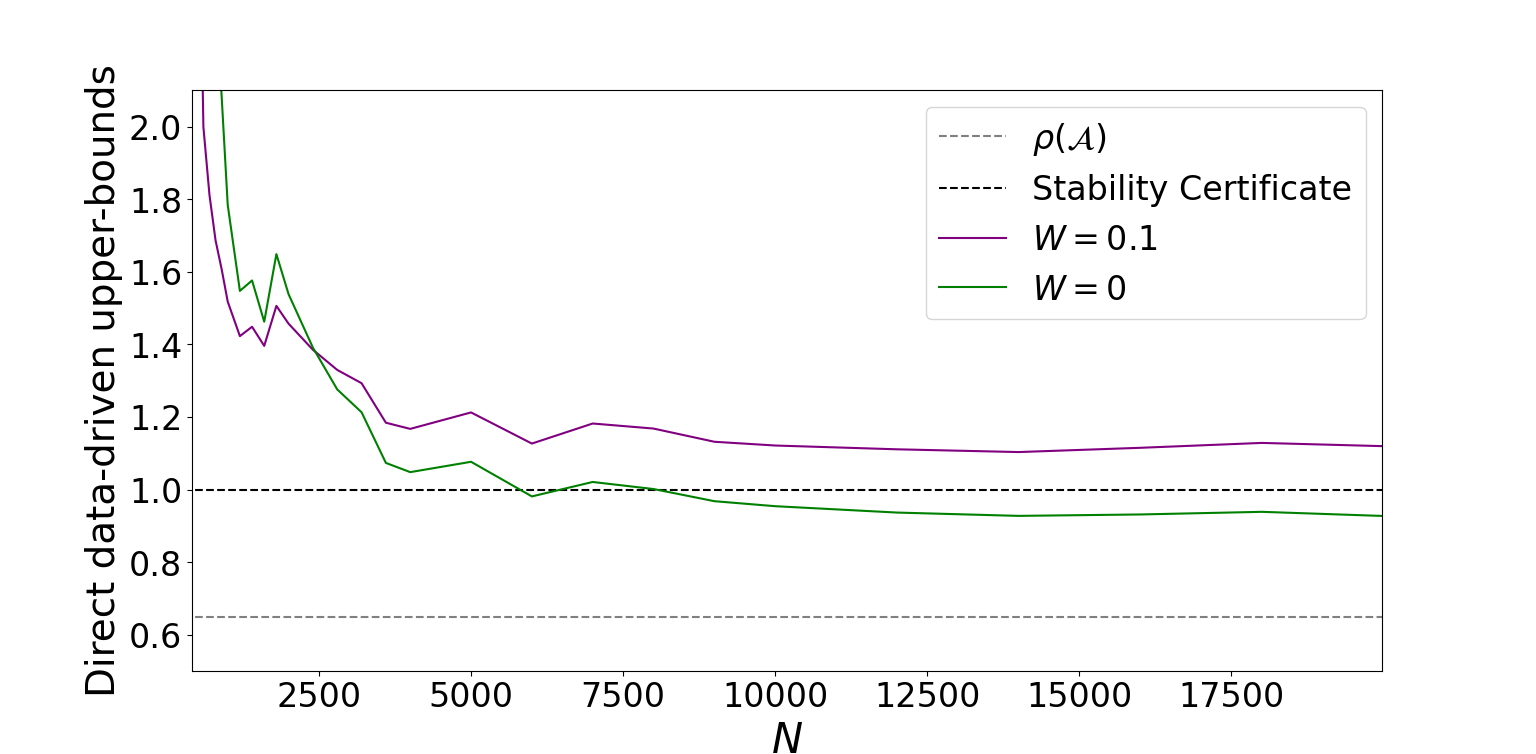}
    \caption{\textbf{Comparison under bounded noise between \cite[Theorem~1]{banse2024learning} ($W=0.1$) and Theorem \ref{thm:jsr-direct-bound} ($W=0$)}. 
Our bound converges to a smaller value and certifies stability, while the bound in \cite{banse2024learning} does not.}
    \label{fig:consensus_ub_bounded_direct}
\end{figure}

\section{Conclusion}\label{sec:conclusion}
In this paper, we developed a new framework for indirect data-driven stability analysis of switched linear systems by combining statistical guarantees for system identification with Lyapunov-based stability certification. We also improved the finite-sample bounds for direct data-driven stability analysis from \cite{banse2024learning}.

The resulting direct and indirect methods were compared experimentally. Our results show that the indirect approach can significantly outperform the direct approach, especially when the dataset is small or contains outliers. In particular, it can provide nontrivial stability certificates in situations where the direct approach fails to do so, even asymptotically.

Beyond the specific methods developed in this paper, these results contribute to the broader discussion on the relative merits of direct and indirect approaches. In line with the observations of \cite{krishnan2021direct}, the indirect approach often exhibits superior performance in limited-data regimes. At the same time, our findings indicate significant practical differences between direct and indirect methods for switched system stability certification, in sharp contrast to recent linear-control studies highlighting close connections between the two paradigms \cite{dorfler2023bridgingdirectvsindirect,dorfler2023certaintyequivalence}.

Understanding the origin of these differences and developing direct methods that are more robust to outliers constitute promising directions for future research. We also plan to extend the comparison to adaptive sampling strategies developed for the direct method, which appear promising for improving sample efficiency \cite{vuille2024data,vuille2025stochastic}.

\bibliographystyle{plain}        % Include this if you use bibtex 

\bibliography{autosam}           % and a bib file to produce the 

\appendix

\section{Examples for Assumption~\ref{assum:noise}}\label{appendix:exmp}

\begin{exmp}
Assume the noise $w$ has zero mean and covariance matrix $R$.
Then by a direct application of Markov's inequality to $w^{\top} w = \|w\|_2^2$,
\begin{equation*}
\munoise(\{w \in \mathbb{R}^n:\|w\|_2>W) \leq \frac{\mathrm{tr}(R)}{W^2}.
\end{equation*}
\end{exmp}
\begin{exmp}
Assume that the noise is a scaled standard Gaussian noise in dimension $n$, i.e., $w \sim N(0,\sigma^2I)$, with $\sigma>0$.
Then $\frac{\|w\|_2^2}{\sigma^2} \sim \chi_n^2$, where $\chi_n^2$ is the standard Chi-square distribution with $n$ degrees of freedom, and the tail probability can be computed explicitly as
\begin{equation*}
    \munoise(\|w\|_2 > W)
 =
 1 - \frac{\gamma\!\left(\frac{n}{2}, \frac{W^2}{2\sigma^2}\right)}{\Gamma\!\left(\frac{n}{2}\right)},
\end{equation*}
where $\Gamma(\cdot)$ is the Gamma function and $\gamma(\cdot,\cdot)$ is the lower incomplete Gamma function.
\end{exmp}

\section{Proof of Proposition \ref{prop:kenenian}}\label{appendix:proof_kenenian}
%Given $j \in [m]$, let $V_j \coloneqq \{x \in \mathbb{S}^{n-1}:\frac{\|A_jx\|_P}{\|x\|_P} > \overline{\gamma}\}$, then by assumption of the theorem we have $\mustate(V_j) \leq \epsilon$ with probability $1-\beta$ on the sampling of $\omega_N$.\\
Let $P=L^\top L$ be the Cholesky factorization of $P$ and define $B_j \coloneqq LA_jL^{-1}$. % ($\Leftrightarrow A_j = L^{-1} B_j L$).
Given $j \in [m]$,
\begin{align*}
    &\quad x^\top A_j^\top P A_j x \leq \overline{\gamma}^2 x^\top P x,\;\;\forall x \in (\mathbb{S}^{n-1} \setminus V_j)\\
    %&\Leftrightarrowx^\top L^\top B_j^\top L^{-T} (L^\top L) L^{-1} B_j L x \leq \overline{\gamma}^2 x^\top L^\top L x,\;\;\forall x \in (\mathbb{S}^{n-1} \setminus V_j)\\
    %&\Leftrightarrow(Lx)^\top B_j^\top B_j (L x) \leq \overline{\gamma}^2 (Lx)^\top (L x),\;\;\forall x \in (\mathbb{S}^{n-1} \setminus V_j)\\
    &\Leftrightarrow x^\top B_j^\top B_j x \leq \overline{\gamma}^2 x^\top x,\;\;\forall x \in L(\mathbb{S}^{n-1} \setminus V_j)\\
    &\Leftrightarrow\|B_j x\|_2 \leq \overline{\gamma} \|x\|_2,\;\;\forall x \in \Pi_{\mathbb{S}^{n-1}}(L(\mathbb{S}^{n-1} \setminus V_j))\\
    &\Leftrightarrow\|B_j x\|_2 \leq \overline{\gamma},\;\;\forall x \in (\mathbb{S}^{n-1}\setminus \Pi_{\mathbb{S}^{n-1}}L(V_j)),
\end{align*}
where $\Pi_{\mathbb{S}^{n-1}}$ is the scaling on $\mathbb{S}^{n-1}$ and the third inequality holds by homogeneity of the system.
Denote $V'_j\coloneqq\Pi_{\mathbb{S}^{n-1}}(L(V_j))$. By \cite[Theorem 15]{kenanian2019data}, we have: $\mustate(V'_j) \leq \kappa(P) \mustate(V_j)$.
Now, let $\xi_j > 0$ be the radius of the largest sphere included in $\mathrm{conv}(\mathbb{S}^{n-1}\setminus V'_j)$. Then, given $z \in \mathbb{S}^{n-1}\setminus V'_j$, there exists $x_1,x_2 \in (\mathbb{S}^{n-1}\setminus V'_j)$ and $\lambda \in [0,1]$ such that $\xi_j z = \lambda x_1 + (1-\lambda) x_2$ and by the triangle inequality:
\begin{align*}
    &\quad\|B_j (\xi_j z)\|_2 \leq \lambda \|B_j x_1\|_2 + (1-\lambda) \|B_j x_2\|_2 \leq \overline{\gamma}\\
    &\Leftrightarrow\|B_j z\|_2 \leq \frac{\overline{\gamma}}{\xi_j}.
\end{align*}
Therefore, we have the following inequality:
\begin{align*}
    &\quad\|B_j x\|_2 \leq \frac{\overline{\gamma}}{\xi_j},\;\forall x \in \mathbb{S}^{n-1} \\
    &\Leftrightarrow\|B_j x\|_2 \leq \frac{\overline{\gamma}}{\xi_j} \|x\|_2,\;\forall x \in \mathbb{R}^{n} \\
    %&\Leftrightarrow\|LA_jL^{-1} x\|_2 \leq \frac{\overline{\gamma}}{\xi_j} \|x\|_2,\;\;\;\forall x \in \mathbb{R}^{n} \\
    %&\Leftrightarrow\|LA_jx\|_2 \leq \frac{\overline{\gamma}}{\xi_j} \|Lx\|_2,\;\;\;\forall x \in L^{-1}(\mathbb{R}^{n}) \\
    &\Leftrightarrow\|A_jx\|_P \leq \frac{\overline{\gamma}}{\xi_j} \|x\|_P,\;\forall x \in \mathbb{R}^n
\end{align*}
Finally, following \cite[Proposition 3]{kenanian2019data}, we have:
\begin{equation*}
    \xi_j \geq \zeta\Big(\frac{\mustate(V'_j)}2\Big) \geq \zeta\Big(\frac{\kappa(P)\mustate(V_j)}2\Big) \geq \zeta\Big(\frac{\eta\kappa(P)}2\Big).
\end{equation*}
Therefore, we have
\begin{equation*}
    \max_{x \in \mathbb{S}^{n-1}} \frac{\|A_jx\|_P}{\|x\|_P} \leq \frac{\overline{\gamma}}{\xi_j} \leq \frac{\overline{\gamma}}{\zeta\left(\frac12\kappa(P)\eta\right)}
\end{equation*}
and
\begin{equation*}
    \rho(\mathcal{A}) \leq \rhoquad(\mathcal{A},\delta) = \max_{\substack{x \in \mathbb{S}^{n-1}\\j \in [m]}} \frac{\|A_jx\|_P}{\|x\|_P} \leq \frac{\overline{\gamma}}{\zeta\left(\frac12\kappa(P)\eta\right)}.\qed
\end{equation*}
\section{System Identification Algorithm}\label{appendix:algo}

\begin{algorithm}[H]
\caption{Trimmed $k$-LinReg for SLS identification}
\label{algo:trimmed-k-linReg}
\begin{algorithmic}[1]

\Require Dataset $\calD\coloneqq\{(x_i,y_i)\}_{i=1}^N$ of noisy trajectories from unknown SLS $\calA$; number of modes $m$; trimming level $\theta\in(0,1)$; tolerance $\epsilon_{\mathrm{tol}}>0$; maximum number of iterations $T$.

%\State Randomly initialize $\hat{A}_j\in \mathbb{R}^{n \times n}$, $j = 1,\hdots,m$.
%\State Set $t \gets 0$.
\State Randomly initialize $\{\hat A_j\}_{j=1}^m$ and set $t\gets0$.
\Repeat

    \State $t \gets t+1$.
    
    \For{$i = 1$ to $N$}
        \State Let $\hat{r}_i \coloneqq \argmin\limits_{j \in \{1,\dots,m\}}
        \| y_i - \hat{A}_j x_i \|_2^2$.
    \EndFor

    \For{$j = 1$ to $m$}
    
        \State Let $I_j \coloneqq 
        \{i \in [N] : \hat{r}_i = j \}$.
        
        \If{$I_j \neq \emptyset$}

            \State Compute $e_i \coloneqq \| y_i - \hat{A}_j x_i \|_2^2$, $i \in I_j$.
            %\State Compute residuals $e_i \coloneqq \| y_i - \hat{A}_j x_i \|_2^2$, $i \in I_j$.
            
            \State Let $I_j^\theta$ be the indices of the $\lfloor(1-\theta)|I_j|\rfloor$ smallest residuals $e_i$ in $I_j$.
            %\State Let $I_j^{\theta} \subseteq I_j$ be the subset of $I_j$ containing the indices of the $\lfloor (1-\theta)|I_j| \rfloor$ smallest residuals among all residuals with indices in $I_j$.
            
            \State 
            Let $\hat{A}_j \coloneqq 
            \argmin\limits_{A \in \mathbb{R}^{n \times n}}
            \sum_{i \in I_j^{\theta}}
            \| y_i - A x_i \|_2^2$
            
        \EndIf
    \EndFor

\Until{$\max\limits_{j\in\{1,\ldots,m\}}
\|\hat A_j^{(t)}-\hat A_j^{(t-1)}\|_F
\le \epsilon_{\mathrm{tol}}
\;\:\text{or}\:\;
t \ge T$}

\State \Return $\{\hat{A}_j\}_{j=1}^m$
\end{algorithmic}
\end{algorithm}
%In all experiments, we set $\epsilon_{\mathrm{tol}} = 10^{-6}$ and $T = 100$. We used $\theta = 0.01$ in Secthe Gaussian-noise experiments and $\theta = 0.05$ in the Gaussian-mixture-noise experiments.
In all experiments, we set $\epsilon_{\mathrm{tol}} = 10^{-6}$, $T = 100$, $\theta = 0.01$ and $\theta = 0.05$ for Sections \ref{ssec:gauss_noise} and \ref{ssec:gauss_mixt_noise} respectively.
\section{Additional Figures and Comparisons}\label{apendix:add_fig}
Figures~\ref{fig:consensus_ub_gaussian_direct} and~\ref{fig:consensus_ub_gaussian_indirect_epsilon} present the guarantees obtained by the direct and indirect methods by varying several parameters.
Figure~\ref{fig:consensus_ub_gaussian_indirect_epsilon} compares the upper bound with the indirect method for different values of the parameter $\epsilon$.
Theorem \ref{thm:indirect-jsr-bound} using $\epsilon=5 \cdot 10^{-3}$ gives the tightest upper bound (the curve corresponding to $\epsilon = 3 \cdot 10^{-3}$ is not visible because the upper bound exceeds the plotted range.). Figure~\ref{fig:consensus_ub_gaussian_direct} compares the upper bound with the direct method for different values of the parameter $W$. Theorem \ref{thm:indirect-jsr-bound} with $W=4 \cdot 10^{-3}$ gives the tightest upper bound.

\begin{figure}
    \centering
    \includegraphics[width=8.6cm]{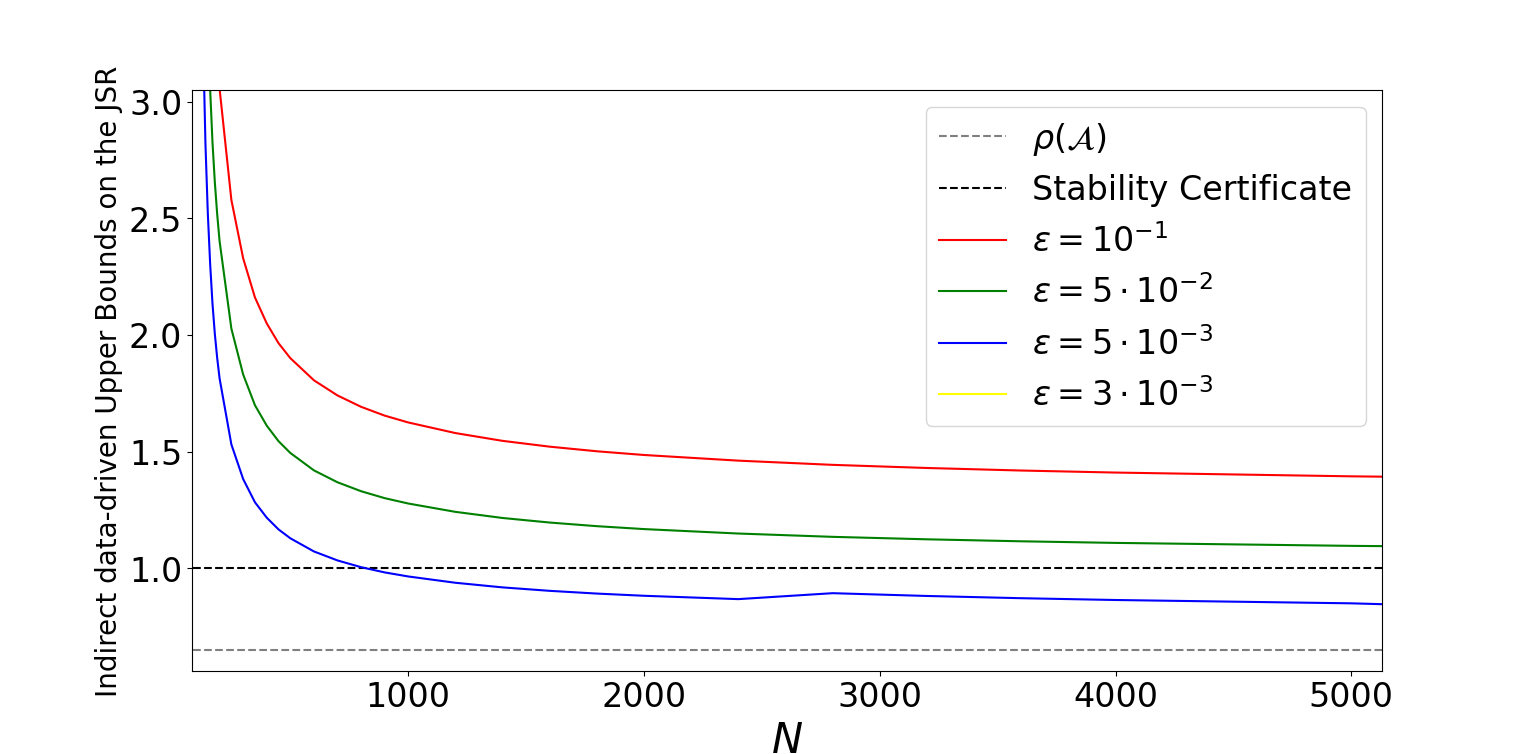}
    \caption{\textbf{Comparison of the probabilistic upper bounds with the indirect approach and different $\epsilon$}.
The value $\epsilon=5 \cdot 10^{-3}$ gives the tightest upper bound.}
    \label{fig:consensus_ub_gaussian_indirect_epsilon}
\end{figure}

\begin{figure}
    \centering
    \includegraphics[width=8.6cm]{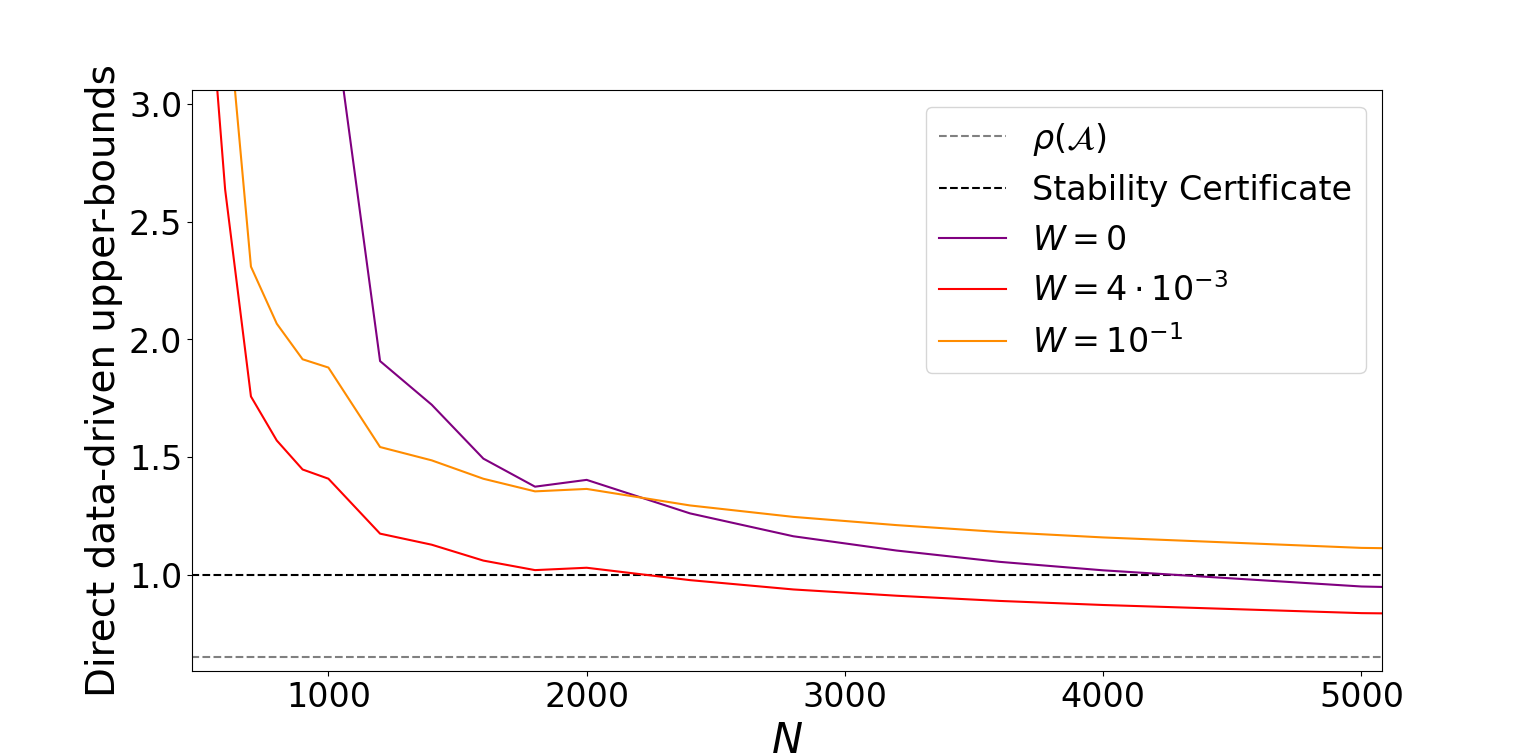}
    \caption{\textbf{Comparison of the probabilistic upper bounds obtained with the direct approach}.\\
$W=4 \cdot 10^{-3}$ gives the tightest upper bound.}
    \label{fig:consensus_ub_gaussian_direct}
\end{figure}

\end{document}